\newcommand{\ignore}[1]{}{}
\newcommand{\eq}{\eqref}
\newcommand{\eqref}[1]{(\ref{#1})}
\def\E{{\mathbb{E}}}
\def\Var{\operatorname{Var}}
\def\cov{\operatorname{Cov}}
\def\P{{\mathbb{P}}}
\newtheorem{theorem}{Theorem}[section]
\newtheorem{lem}[theorem]{Lemma}
\newtheorem{cor}[theorem]{Corollary}
\newtheorem{prop}[theorem]{Proposition}
\begin{document}
\begin{frontmatter}

\title{On the error bound in a combinatorial central limit theorem}
\runtitle{Combinatorial central limit theorem}

\begin{aug}
\author[a]{\inits{L.H.Y.}\fnms{Louis H.Y.} \snm{Chen}\corref{}\thanksref{a}\ead[label=e1]{matchyl@nus.edu.sg}} \and
\author[b]{\inits{X.}\fnms{Xiao} \snm{Fang}\thanksref{b}\ead[label=e2]{stafx@nus.edu.sg}}
\address[a]{Department of Mathematics,
National University of Singapore,
10 Lower Kent Ridge Road,
Singapore 119076,
Republic of Singapore.
\printead{e1}}
\address[b]{Department of Statistics and Applied Probability,
National University of Singapore,
6 Science Drive 2,
Singapore 117546,
Republic of Singapore.
\printead{e2}}
\end{aug}

\received{\smonth{11} \syear{2011}}
\revised{\smonth{9} \syear{2013}}

%
\begin{abstract}
Let $\mathbb{X}=\{X_{ij}\dvt  1\le i,j \le n\}$ be an $n\times n$ array of
independent random
variables where $n\ge2$. Let $\pi$ be a uniform random permutation of
$\{1,2,\dots, n\}$,
independent of $\mathbb{X}$, and let $W=\sum_{i=1}^n X_{i\pi(i)}$.
Suppose $\mathbb{X}$ is standardized
so that $\E W=0, \Var(W)=1$. We prove that the Kolmogorov distance
between the distribution of $W$ and
the standard normal distribution is bounded by $451\sum_{i,j=1}^n \E
|X_{ij}|^3/n$. Our approach is by
Stein's method of exchangeable pairs and the use of a concentration inequality.
\end{abstract}

%
\begin{keyword}
\kwd{combinatorial central limit theorem}
\kwd{concentration inequality}
\kwd{exchangeable pairs}
\kwd{Stein's method}
\end{keyword}

\end{frontmatter}
%
\section{Introduction and statement of the main result}\label{sec1}

Motivated by permutation tests in non-parametric statistics,
Wald and Wolfowitz \cite{WaWo44} proved a central limit theorem for
the combinatorial
statistics $\sum_{i=1}^n a_i b_{\pi(i)}$ where $\{a_i, b_j\dvt  i,j\in
[n]:=\{1,2,\dots,n\}\}$ are real numbers and $\pi$ is a uniform
random permutation of $[n]$. Their result was generalized to real
arrays $\{c_{ij}\dvt  i,j\in[n]\}$ by Hoeffding \cite{Ho51}.
Extension to random
arrays $\{X_{ij}\dvt  i,j\in[n]\}$ where the $X_{ij}$ are independent
random variables was considered by Ho and Chen \cite{HoCh78}.
Using the
concentration inequality approach in Stein's method, they proved a
bound on the Kolmogorov distance between the distribution of $\sum_{i=1}^n {X_{i\pi(i)}}$ and the normal distribution with the same mean
and variance.
The bound in Ho and Chen \cite{HoCh78} is optimal only when
$|X_{ij}|\le C$ for
some $C>0$. A third-moment bound for a combinatorial central limit
theorem for real arrays $\{c_{ij}\dvt  i,j\in[n]\}$ was obtained by
Bolthausen \cite{Bo84}, who used Stein's method and induction.
However, the absolute
constant in the bound in Bolthausen \cite{Bo84} is not
explicit. A bound with an
explicit constant for real arrays with $|c_{ij}|\le C$ was obtained by
Goldstein \cite{Go05} using Stein's method and zero-bias
coupling (see also
Chen, Goldstein and Shao \cite{ChGoSh11}).
Under the same setting as Ho and Chen \cite{HoCh78},
Neammanee and Suntornchost \cite{NeSu05} stated a
third-moment bound. They used the same Stein identity in Ho and Chen \cite{HoCh78}, which dates back to Chen \cite
{Ch75}, and the concentration
inequality approach. However, there is an error in the proof in Neammanee and Suntornchost \cite{NeSu05},
where the first equality and the second inequality on page 576 are
incorrect because of the dependence among $S(\tau), \Delta S$ and
$M(t)$. Although the bound obtained by Neammanee and Rerkruthairat \cite
{NeRe12} (see Theorem~1.1
on page 1591) simplifies to a third-moment bound plus a term of a
smaller order, the latter contains an undetermined constant. Besides,
the proof of Theorem~1.1 uses a result of
Neammanee and Rattanawong \cite{NeRa08} (see (23) on page 21), whose
correctness is in question.

In this paper, we give a different proof of the combinatorial central
limit theorem. Our result gives a third-moment bound with an explicit
constant under the setting of Ho and Chen \cite{HoCh78}. Our approach is by Stein's
method of exchangeable pairs and the use of a concentration inequality.
The use of an exchangeable pair simplifies the construction of a Stein
identity as compared to the construction in Ho and Chen \cite
{HoCh78}, Neammanee and Suntornchost \cite{NeSu05} and Neammanee and Rerkruthairat
\cite{NeRe12}.

Stein's method was introduced by Stein \cite{St72,St86} and has become a
popular tool in proving distributional approximation results because of
its power in handling dependence among random variables. We refer to
Barbour and Chen \cite{BaCh05} and Chen, Goldstein and Shao \cite
{ChGoSh11} for an introduction to Stein's
method. The notion of exchangeable pair was introduced by Stein \cite{St86},
and first used in Diaconis \cite{Di77}. The concentration
inequality approach
was also introduced by Stein (see Ho and Chen \cite{HoCh78})
and was developed by
Chen \cite{Ch86,Ch98} and Chen and Shao
\cite{ChSh01,ChSh04}. This approach provides a
smoothing technique and a way of obtaining third-moment bounds on the
Kolmogorov distance.

\ignore{
Exchangeability was exploited extensively in Stein \cite
{St86} to prove
normal approximation results. However, not like the Berry-Esseen
theorem, it naturally results in fourth-moment bounds on Kolmogorov
distances. To overcome this difficulty, we first control the
probability that some random variable takes values in a small interval,
then use such a concentration inequality to prove third-moment bounds
on Kolmogorov distances. This approach is called the concentration
inequality approach. The concentration inequality approach has been
used to prove a Berry-Esseen bound for sums of i.i.d. random variables
in Ho and Chen \cite{HoCh78}, and for sums of locally
dependent random variables
in Chen and Shao \cite{ChSh04}. Recently, this approach has
been extended to
multivariate setting in \citeauthor{ChFa11} \cite{ChFa11}. }

The following is our main result.

\begin{theorem}\label{t1}
Let $\mathbb{X}=\{X_{ij}\dvt  i,j\in[n]\}$ be an $n\times n$ array of
independent random variables where $n\geq2$, $\E X_{ij}=c_{ij}$, $\Var
{X_{ij}}=\sigma_{ij}^2\ge0$ and $\E|X_{ij}|^3<\infty$. Assume
%
\begin{equation}
\label{1.1} c_{i\cdot}=c_{\cdot j}=0,
\end{equation}
where $c_{i\cdot}=\sum_{j=1}^n c_{ij}/n$, $c_{\cdot j}=\sum_{i=1}^n
c_{ij}/n$.
Let $\pi$ be a uniform random permutation of $[n]$, independent of
$\mathbb{X}$, and let $W=\sum_{i=1}^n X_{i\pi(i)}$.
Then
\begin{enumerate}[(2)]
\item[(1)]
%
\begin{equation}
\label{1.2} \Var(W)=\frac{1}{n} \sum_{i,j=1}^n
\sigma_{ij}^2 +\frac{1}{n-1} \sum
_{i,j=1}^n c_{ij}^2,
\end{equation}
\item[(2)]
assuming $\Var(W)=1$,
we have
%
\begin{equation}
\label{t1-1} \sup_{z\in\mathbb{R}}\bigl|P(W\le z)-\Phi(z)\bigr|\le451\gamma,
\end{equation}
where $\Phi$ is the standard normal distribution function and
%
\begin{equation}
\label{t1-2} \gamma=\frac{1}{n}\sum_{i,j=1}^n
\E|X_{ij}|^3.
\end{equation}
\end{enumerate}
\end{theorem}

Specializing Theorem~\ref{t1} to the case where $\sigma_{ij}=0$, we have
%
\begin{equation}
\label{c1-2} \sup_{z\in\mathbb{R}}\bigl|P(W\le z)-\Phi(z)\bigr|\le\frac{451}{n}
\sum_{i,j=1}^n |c_{ij}|^3.
\end{equation}
Moreover, if $|c_{ij}|\le C$ for all $i,j\in[n]$, then
%
\begin{equation}
\label{c1-3} \sup_{z\in\mathbb{R}}\bigl|P(W\le z)-\Phi(z)\bigr|\le451C.
\end{equation}

\begin{rem}
The constant $451$ in \eq{c1-2}, and therefore that in \eq{c1-3}, can
be reduced by a modification of the proof of Theorem~\ref{t1} for the
special case where $\sigma_{ij}=0$. The error bound in \eq{c1-2} was
obtained by Bolthausen \cite{Bo84} except that the constant in
his bound is not
explicit. In the case where $|c_{ij}|\leq C$, the error bound in \eq
{c1-3} is of the same order as that in Goldstein \cite{Go05}
and in Theorem~6.1
of Chen, Goldstein and Shao \cite{ChGoSh11}, although the constant in
Theorem~6.1 of Chen, Goldstein and Shao \cite{ChGoSh11} is $16.3$, which
is smaller. The constant $16.3$ is not
directly comparable to the constant $451$ in \eq{t1-1} and \eq{c1-2}
as the bounds in \eq{t1-1} and \eq{c1-2} are of third moment type,
which may be smaller.
\end{rem}

\begin{rem}\label{c2}
Any $n\times n$ array of independent random variables $\{Y_{ij}\dvt  i,j\in
[n]\}$ with $\E Y_{ij}=\mu_{ij}, \Var{Y_{ij}}=\sigma_{ij}^2\ge0$
and $\E|Y_{ij}|^3<\infty$ can be reduced to the array in Theorem~\ref
{t1} satisfying \eq{1.1} by defining $c_{ij}=\mu_{ij}-\mu_{i \cdot
}-\mu_{\cdot j}+\mu_{\cdot\cdot}$ and $X_{ij}=Y_{ij}-\mu_{i \cdot
}-\mu_{\cdot j}+\mu_{\cdot\cdot}$.
\end{rem}

Theorem~\ref{t1} has the following corollary for simple random
sampling from independent random variables.

\begin{cor}\label{c3}
Let $\{Y_1,\dots, Y_n\}$ be independent random variables with $\E
Y_i=\mu_i$, $\Var(Y_i)=\sigma_i^2\ge0$. For a positive integer
$k\le n$, let $\{Y_{\xi_1},\dots,Y_{\xi_k}\}$ be uniformly chosen
from $\{Y_1,\dots, Y_n\}$ without replacement, and let $V=\sum_{i=1}^k Y_{\xi_i}$. Then we have
%
\begin{eqnarray}
\label{c3-1} &&\hspace*{-15pt}\sup_{z\in\mathbb{R}}\biggl|P\biggl(
\frac{V- k\bar{\mu}}{\sigma}\le z\biggr)-\Phi (z)\biggr|\nonumber
\\[-8pt]\\[-8pt]
&&\hspace*{-15pt}\quad \le\frac{451}{n\sigma^3} \Biggl[ k\sum_{i=1}^n
\E \biggl| \frac{k}{n}(Y_i-\mu_i) +\frac
{n-k}{n}
(Y_i-\bar{\mu}) \biggr|^3 +(n-k)\sum
_{i=1}^n \biggl| \frac{k}{n}(\mu_i-
\bar{\mu}) \biggr|^3 \Biggr], \nonumber%
\end{eqnarray}
where
\begin{eqnarray*}
\bar{\mu}=\sum_{i=1}^n
\mu_i/n, \qquad \sigma^2=\Var(V)= \frac{k}{n} \sum
_{i=1}^n \sigma_i^2
+\frac{k(n-k)}{n(n-1)} \sum_{i=1}^n (\mu
_i-\bar{\mu})^2 ,
\end{eqnarray*}
and $\Phi$ is the standard normal distribution function.
\end{cor}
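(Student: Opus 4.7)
The plan is to realize the simple-random-sampling statistic $V$ as a permutation statistic $\sum_i X_{i\pi(i)}$ of a suitably chosen $n\times n$ array of independent random variables, after which Theorem \ref{t1} (applied via Remark \ref{c2}) delivers the bound essentially for free.

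First I would introduce an $n\times n$ array of independent random variables $\{Y_i^{(j)}: i,j\in[n]\}$ with $Y_i^{(j)} \stackrel{d}{=} Y_i$, and set
\be{
X_{ij}=Y_i^{(j)}\mathbf{1}\{j\le k\}.
}
For a uniform random permutation $\pi$ of $[n]$, independent of the array, the random set $S:=\{i:\pi(i)\le k\}$ is a uniform random $k$-subset of $[n]$, and conditional on $(S,\pi|_S)$ the sum $\sum_i X_{i\pi(i)}=\sum_{i\in S}Y_i^{(\pi(i))}$ has the same distribution as $\sum_{i\in S}Y_i$. Hence $\sum_i X_{i\pi(i)}\stackrel{d}{=}V$.

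Second, I would center the array following Remark \ref{c2}. Here $\mu_{ij}=\E X_{ij}=\mu_i\mathbf{1}\{j\le k\}$, so $\mu_{i\cdot}=k\mu_i/n$, $\mu_{\cdot j}=\bar\mu\,\mathbf{1}\{j\le k\}$ and $\mu_{\cdot\cdot}=k\bar\mu/n$. Summing these shifts over any permutation gives $\sum_i(\mu_{i\cdot}+\mu_{\cdot\pi(i)}-\mu_{\cdot\cdot})=k\bar\mu$, so the centered array $\tilde X_{ij}=X_{ij}-\mu_{i\cdot}-\mu_{\cdot j}+\mu_{\cdot\cdot}$ satisfies $\tilde W:=\sum_i\tilde X_{i\pi(i)}\stackrel{d}{=}V-k\bar\mu$. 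A direct evaluation yields
\be{
\tilde X_{ij}=\begin{cases}(Y_i^{(j)}-\mu_i)+\tfrac{n-k}{n}(\mu_i-\bar\mu), & j\le k,\\[2pt] -\tfrac{k}{n}(\mu_i-\bar\mu), & j>k,\end{cases}
}
and the algebraic identity $(Y_i-\mu_i)+\tfrac{n-k}{n}(\mu_i-\bar\mu)=\tfrac{k}{n}(Y_i-\mu_i)+\tfrac{n-k}{n}(Y_i-\bar\mu)$ rewrites the $j\le k$ case in the form that appears in \eq{c3-1}.

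Third, I would apply Theorem \ref{t1} to the rescaled array $\{\tilde X_{ij}/\sigma\}$, which has variance $1$. Part (1) of that theorem reproduces, after plugging in $\sigma_{ij}^2=\sigma_i^2\mathbf{1}\{j\le k\}$ and the $c_{ij}$ above, the stated variance formula $\sigma^2=\tfrac{k}{n}\sum\sigma_i^2+\tfrac{k(n-k)}{n(n-1)}\sum(\mu_i-\bar\mu)^2$. Part (2) gives
\be{
\sup_{z}\bigl|P\bigl(\tfrac{V-k\bar\mu}{\sigma}\le z\bigr)-\Phi(z)\bigr|\le\frac{451}{n\sigma^{3}}\sum_{i,j}\E|\tilde X_{ij}|^{3},
}
and splitting the double sum into $j\le k$ (which contributes $k$ copies of the cubed-moment term) and $j>k$ (which contributes $n-k$ copies of the deterministic term) yields exactly \eq{c3-1}. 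The only real obstacle is the initial modeling step, namely finding the right array of genuinely independent entries whose permutation statistic matches $V$; the use of independent copies $Y_i^{(j)}$ together with the indicator $\mathbf{1}\{j\le k\}$ is the key trick, after which everything reduces to bookkeeping and an application of Theorem \ref{t1}.
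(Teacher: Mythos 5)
Your proof is correct and takes essentially the same route as the paper: the paper also realizes $V$ as a permutation statistic of an array in which $k$ of the rows are independent copies of $(Y_1,\dots,Y_n)$ and the remaining $n-k$ rows are zero, and then invokes Theorem \ref{t1} together with the centering of Remark \ref{c2} (your construction is merely the transpose of theirs, which is immaterial). The explicit centering computation and the third-moment bookkeeping you carry out are exactly what the paper leaves implicit.
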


\begin{pf}
Using the same idea as in the Poisson approximation for the
hypergeometric distribution in Corollary~3.4 of Chen
\cite{Ch75}, we let
the $n\times n$ array $\mathbb{Y}$ be such that the first $k$ rows are
independent copies of $\{Y_1,\dots, Y_n\}$ and the other rows are
zeros. Then $\mathcal{L}(\sum_{i=1}^n Y_{i \pi(i)})=\mathcal
{L}(\sum_{i=1}^k Y_{\xi_i})$. Therefore, the bound \eq{c3-1} follows
from Theorem~\ref{t1} and Remark~\ref{c2}.
\end{pf}

Our interest in Corollary~\ref{c3} is motivated by the work of
Wolff \cite{Wo12} who considered sampling without
replacement of independent
random variables in the construction of random embeddings which are
with high probability almost isometric in the sense of Johnson and Lindenstrauss \cite{JoLi84}.
In Corollary~\ref{c3}, if $\mu_i=0$ for all $i\in[n]$, then $\sigma
^2=\frac{k}{n}\sum_{i=1}^n \sigma_i^2$ and the error bound in \eq
{c3-1} reduces to $\frac{451k}{n\sigma^3}\sum_{i=1}^n \E|Y_i|^3$.
Wolff \cite{Wo12} studied this special case and obtained
the bound $\frac
{3k}{n\sigma^3}\sum_{i=1}^n \E|Y_i|^3$ for the Wasserstein distance.
The case where $\sigma_i=0$ for all $i\in[n]$ in Corollary~\ref{c3}
was considered by Goldstein \cite{Go07} using zero-bias
coupling, where a
similar bound with a smaller constant was obtained for the Wasserstein
distance (see Theorem~5.1 of Goldstein \cite{Go07}).

Although we will not consider Wasserstein distance in this paper, we
wish to mention that a Wasserstein distance bound can be obtained for
the normal approximation for $\mathcal{L}(W)$ where $W$ is defined in
Theorem~\ref{t1}. The proof for the bound will not require a
concentration inequality and the bound will have a smaller constant.
Indeed, a bound with a smaller constant has been obtained for the
Wasserstein distance in Theorem~6.1 of Goldstein \cite{Go07}
in the case where
$\sigma_{ij}=0$.

In the next section, we prove a concentration inequality using
exchangeable pairs (Lemma~\ref{l1}) and apply it to random variables
with a combinatorial structure similar to that of $W$ in Theorem~\ref
{t1}. In Section~\ref{sec3}, we prove our main result, Theorem~\ref{t1}, by
Stein's method of exchangeable pairs and the concentration inequality approach.

\section{A concentration inequality for exchangeable pairs}

Assuming the existence of an exchangeable pair $(S, S')$ satisfying an
approximate linearity condition, the next lemma provides a bound on $\P
(S\in[a,b])$.

\begin{lem}\label{l1}
Suppose $(S,S')$ is an exchangeable pair of square integrable random
variables and satisfies the following approximate linearity condition
%
\begin{equation}
\label{l1-1} \E\bigl(S'-S|S\bigr)=-\lambda S+R
\end{equation}
for a positive number $\lambda$ and a random variable $R$. Then, for $a<b$,
%
\begin{eqnarray}
\label{l1-2} &&P\bigl(S\in[a,b]\bigr)\nonumber
\\
&&\quad \le\frac{\E|S|+\E|R|/\lambda}{\E S^2-\E|SR|/\lambda
-1/2}\biggl(\frac
{b-a}{2}+\delta\biggr)
\\
&&\qquad{} +\frac{1}{\E S^2-\E|SR|/\lambda-1/2}\sqrt{\Var \biggl( \E \biggl( \frac{1}{2\lambda}
\bigl(S'-S\bigr)^2I\bigl(\bigl|S'-S\bigr|\le\delta
\bigr) | S \biggr) \biggr)},\nonumber %
\end{eqnarray}
where
%
\begin{equation}
\label{l1-3} \delta=\frac{\E|S'-S|^3}{\lambda}
\end{equation}
provided that $\E S^2-\E|SR|/\lambda-1/2>0$.
\end{lem}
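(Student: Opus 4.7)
The plan is to exploit the exchangeable pair identity
\begin{equation*}
\lambda \E[Sf(S)] - \E[Rf(S)] = \tfrac{1}{2}\E[(S'-S)(f(S')-f(S))],
\end{equation*}
which follows by combining the antisymmetry $\E[(f(S)+f(S'))(S-S')]=0$ (a consequence of exchangeability) with \eq{l1-1}, applied to a carefully chosen Lipschitz test function. The goal is to produce, on the event $\{S\in[a,b]\}$, a matching between the quadratic $(S'-S)^2$ and the first-order increment $(S'-S)(f(S')-f(S))$, so that a bound on one transfers to a bound on the other.

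Accordingly, I take the test function to be the piecewise-linear
\begin{equation*}
f(x) = \int_{(a+b)/2}^{x} I(|y-(a+b)/2|\le (b-a)/2+\delta)\, dy,
\end{equation*}
which is nondecreasing, $1$-Lipschitz, bounded in absolute value by $(b-a)/2+\delta$, and satisfies $f'\equiv 1$ on the slightly enlarged interval $[a-\delta,b+\delta]$. Two facts follow. First, on $\{S\in[a,b]\}\cap\{|S'-S|\le\delta\}$ one has $S,S'\in[a-\delta,b+\delta]$, so $f(S')-f(S)=S'-S$ and hence $(S'-S)(f(S')-f(S))=(S'-S)^2$. Second, $f$ being nondecreasing forces $(S'-S)(f(S')-f(S))\ge 0$ everywhere. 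Inserting this $f$ into the identity and using $|f|\le (b-a)/2+\delta$ on the left side then yields
\begin{equation*}
\tfrac{1}{2\lambda}\E\bigl[(S'-S)^2 I(|S'-S|\le\delta)\, I(S\in[a,b])\bigr]\le \bigl(\E|S|+\E|R|/\lambda\bigr)\bigl((b-a)/2+\delta\bigr).
\end{equation*}

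Let $T=\E\bigl[\tfrac{1}{2\lambda}(S'-S)^2 I(|S'-S|\le\delta)\,\bigm|\,S\bigr]$. Taking $f(x)=x$ in the exchangeable pair identity gives $\tfrac{1}{2\lambda}\E[(S'-S)^2]=\E S^2-\E[RS]/\lambda$, while the large-jump piece satisfies $\tfrac{1}{2\lambda}\E[(S'-S)^2 I(|S'-S|>\delta)]\le \E|S'-S|^3/(2\lambda\delta)=1/2$ thanks to the choice \eq{l1-3} of $\delta$. Subtracting gives $\E T\ge \E S^2-\E|SR|/\lambda-1/2$, which is precisely the denominator of the claimed bound. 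Since $I(S\in[a,b])$ is $\sigma(S)$-measurable, the left side of the previous display equals $\E[T\, I(S\in[a,b])]$. Writing $\E[T\, I(S\in[a,b])]=\E T\cdot \P(S\in[a,b])+\cov(T, I(S\in[a,b]))$ and bounding the covariance by $\sqrt{\Var(T)}$ (since the centered indicator has absolute value at most $1$), a rearrangement under the standing assumption $\E S^2-\E|SR|/\lambda-1/2>0$ produces exactly \eq{l1-2}.

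The main obstacle is engineering the test function $f$ so that three competing requirements hold simultaneously: $f$ must be $1$-Lipschitz globally (so the sup bound on $|f|$ can replace slope-one behavior on the tails), it must have derivative identically $1$ on the enlarged interval $[a-\delta,b+\delta]$ (so that on the key event the increment $(S'-S)(f(S')-f(S))$ recovers the full $(S'-S)^2$), and it must be bounded by $(b-a)/2+\delta$ (so that the identity contributes the desired interval-width factor rather than something larger). Once $f$ is chosen correctly, the remainder of the argument is a standard covariance plus tail-truncation calculation, and the constants $1/2$ in the denominator and $\delta$ inside the variance fall out directly from the choice $\delta=\E|S'-S|^3/\lambda$.
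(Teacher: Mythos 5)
Your proof is correct and follows essentially the same route as the paper: the identical test function with $f'(w)=I(a-\delta\le w\le b+\delta)$ and $f(\tfrac{a+b}{2})=0$, the same lower bound for the quadratic form restricted to $\{S\in[a,b]\}\cap\{|S'-S|\le\delta\}$, and the same mean-plus-covariance split handled by Cauchy--Schwarz together with the truncation choice $\delta=\E|S'-S|^3/\lambda$ producing the $1/2$ in the denominator. No gaps.
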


\begin{rem}
Bounding the last term on the right-hand side of \eq{l1-2} involves
studying the conditional distribution of $(S'-S)^2$ given $S$.
Truncating $|S'-S|$ at $\delta$ allows us to keep within third
moments. Shao and Su \cite{ShSu06} has also used a
concentration inequality for
exchangeable pairs in their study of character ratios of the symmetric
group. Their concentration inequality, which is different from ours, is
easier to prove than ours but may not be easy to apply to the
combinatorial central limit theorem.
\end{rem}
\begin{pf*}{Proof of Lemma~\ref{l1}}
Without loss of generality, assume $\delta< \infty$. From the
exchangeability of $S$ and $S'$,
%
\begin{equation}
\label{steinexch} \E\bigl(S'-S\bigr) \bigl(f\bigl(S'
\bigr)+f(S)\bigr)=0
\end{equation}
for all $f$ such that the above expectation exists. Therefore,
\begin{eqnarray*}
\E\bigl(S'-S\bigr) \bigl(f\bigl(S'\bigr)-f(S)\bigr)=2
\E\bigl(S-S'\bigr)f(S).
\end{eqnarray*}
Using the approximate linearity condition \eq{l1-1} for the right-hand
side of the above equation, we have for absolutely continuous $f$,
%
\begin{eqnarray}
\label{l1.0} \E Sf(S)&=&\frac{1}{2\lambda} \E\bigl(S'-S
\bigr) \bigl(f\bigl(S'\bigr)-f(S)\bigr)+\frac{1}{\lambda
} \E Rf(S)\nonumber
\\[-8pt]\\[-8pt]
&=&\frac{1}{2\lambda}\E\bigl(S'-S\bigr)\int_0^{S'-S}
f'(S+t) \,\mathrm{d}t +\frac
{1}{\lambda} \E Rf(S).\nonumber %
\end{eqnarray}
The identity \eq{steinexch} was introduced by Stein \cite
{St86} and \eq
{l1.0} was obtained by Stein \cite{St86} in the case
$R=0$ and by Rinott and Rotar \cite{RiRo97} for $R\ne0$.

Let $f$ be such that $f'(w)=I(a-\delta\le w\le b+\delta)$ and
$f(\frac{a+b}{2})=0$. Therefore, $|f|\le\frac{b-a}{2}+\delta$.
Using the property that for all $w,w'\in\mathbb{R}$,
\begin{eqnarray*}
\bigl(w'-w\bigr)\int_0^{w'-w}
f'(w+t) \,\mathrm{d}t\ge0,\vspace*{-1pt}
\end{eqnarray*}
we have\vspace*{-1pt}
\begin{eqnarray*}
&&\frac{1}{2\lambda}\E\bigl(S'-S\bigr)\int
_0^{S'-S} f'(S+t)\, \mathrm{d}t
\\
&&\quad\ge\frac{1}{2\lambda}\E\bigl(S'-S\bigr)\int
_0^{S'-S} f'(S+t) \,\mathrm{d}t I
\bigl(\bigl|S'-S\bigr|\le \delta\bigr)I\bigl(S\in[a,b]\bigr)
\\
&&\quad= \E I\bigl(S\in[a,b]\bigr)\frac{1}{2\lambda}\bigl(S'-S
\bigr)^2 I\bigl(\bigl|S'-S\bigr|\le \delta\bigr)
\\
&&\quad= \E I\bigl(S\in[a,b]\bigr) \biggl[ \E \biggl( \frac{1}{2\lambda}
\bigl(S'-S\bigr)^2 I\bigl(\bigl|S'-S\bigr|\le\delta
\bigr) | S \biggr)-\E\frac{1}{2\lambda}\bigl(S'-S\bigr)^2
I\bigl(\bigl|S'-S\bigr|\le\delta\bigr) \biggr]
\\
&&\qquad{} +\E I\bigl(S\in[a,b]\bigr)\E\frac{1}{2\lambda}\bigl(S'-S
\bigr)^2 I\bigl(\bigl|S'-S\bigr|\le\delta\bigr)
\\
&&\quad:=-R_1+R_2. %
\end{eqnarray*}
Using the Cauchy--Schwarz inequality,
\begin{eqnarray*}
R_1\le\sqrt{\Var \biggl( \E \biggl( \frac{1}{2\lambda
}
\bigl(S'-S\bigr)^2I\bigl(\bigl|S'-S\bigr|\le\delta
\bigr) | S \biggr) \biggr)}.
\end{eqnarray*}
From \eq{l1-1},
\begin{eqnarray*}
\E\bigl(S'-S\bigr)^2=2\E S(\lambda S-R)=2\lambda\E
S^2 -2\E SR.
\end{eqnarray*}
Therefore,
\begin{eqnarray*}
R_2&=&P\bigl(S\in[a,b]\bigr) \E\frac{1}{2\lambda}
\bigl(S'-S\bigr)^2 -P\bigl(S\in[a,b]\bigr) \E
\frac{1}{2\lambda}\bigl(S'-S\bigr)^2I
\bigl(\bigl|S'-S\bigr|> \delta\bigr)
\\
&\ge& P\bigl(S\in[a,b]\bigr) \biggl(\E S^2 -\frac{\E SR}{\lambda}
\biggr)-P\bigl(S\in [a,b]\bigr)\frac
{1}{\delta} \frac{\E|S'-S|^3}{2\lambda}
\\
&=&P\bigl(S\in[a,b]\bigr) \biggl(\E S^2 -\frac{\E SR}{\lambda}-
\frac{1}{2}\biggr)
\\
&\ge& P\bigl(S\in[a,b]\bigr) \biggl(\E S^2 -\frac{\E|SR|}{\lambda}-
\frac{1}{2}\biggr), %
\end{eqnarray*}
where in the last equality we used the definition of $\delta$ in \eq
{l1-3}. Using the fact that $|f|\le\frac{b-a}{2}+\delta$, we have
\begin{eqnarray*}
\bigl|\E Sf(S)\bigr|\le\biggl(\frac{b-a}{2}+\delta\biggr)\E|S|,\qquad  \biggl|\frac{1}{\lambda}\E
Rf(S)\biggr|\le\biggl(\frac{b-a}{2}+\delta\biggr)\frac{\E|R|}{\lambda}.
\end{eqnarray*}
The lemma is proved by applying all the above bounds to \eq{l1.0}.
\end{pf*}
Now we apply Lemma~\ref{l1} to establish a concentration inequality
for a sum $S$ which is defined as follows. Let $\mathbb{X}$ be the $n
\times n$ array defined in Theorem~\ref{t1} satisfying \eq{1.1} and
%
\begin{equation}
\label{1.2a} \frac{1}{n} \sum_{i,j=1}^n
\sigma_{ij}^2 +\frac{1}{n-1} \sum
_{i,j=1}^n c_{ij}^2=1.
\end{equation}
For $n\ge6$ and $m\in\{2,3,4\}$, we remove the last $m$ rows and
columns from the original array $\mathbb{X}$. Let $\tau$ be an
independent uniform random permutation of $[n-m]$.
Define the variable $S$ by
%
\begin{equation}
\label{2.14} S=\sum_{i=1}^{n-m}
X_{i\tau(i)}.
\end{equation}
To prove a concentration inequality for $S$, we need the following
lemma which estimates the second moment of $S$.

\begin{lem}\label{momentlemma}
Let $S$ be defined by \eq{2.14} for some $m\in\{2,3,4\}$ and $n\ge
6$. Suppose $\gamma\le1/c_0$ where $\gamma$ was defined in \eq
{t1-2}. Under the assumptions \eq{1.1} and \eq{1.2a}, we have
%
\begin{equation}
\label{ES2} \frac{n-1}{n-2} -\frac{2n}{(n-4)c_0^{2/3}} -\frac{24n}{(n-5)^2}\le \E
S^2\le\frac{n}{n-5}+\frac{24n}{(n-5)^2}.
\end{equation}
\end{lem}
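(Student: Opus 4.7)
The plan is to compute $\E S^2$ exactly from independence of the $X_{ij}$ and uniformity of $\tau$, then bound the resulting pieces against the normalization \eqref{1.2a} and the third-moment hypothesis $\gamma\le 1/c_0$, using the zero-marginal condition \eqref{1.1} to control boundary contributions. Throughout I abbreviate $A=\sum_{i,j\in[n-m]}\sigma_{ij}^2$, $B=\sum_{i,j\in[n-m]}c_{ij}^2$, $C_i=\sum_{j\in[n-m]}c_{ij}$, $D_j=\sum_{i\in[n-m]}c_{ij}$, and $E=\sum_{i,j\in[n-m]}c_{ij}$, and write $\sigma_{\mathrm{tot}}^2=\sum_{i,j=1}^n\sigma_{ij}^2$, $c_{\mathrm{tot}}^2=\sum_{i,j=1}^n c_{ij}^2$, with corresponding ``extra'' sums over $[n]^2\setminus[n-m]^2$.

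First I would condition on $\tau$: the diagonal contribution is $\E X_{i\tau(i)}^2=\frac{1}{n-m}\sum_{j=1}^{n-m}(\sigma_{ij}^2+c_{ij}^2)$, while for $i\neq i'$ the array's independence gives $\E X_{i\tau(i)}X_{i'\tau(i')}=\frac{1}{(n-m)(n-m-1)}\sum_{j\neq j',\,j,j'\in[n-m]}c_{ij}c_{i'j'}$. Expanding $E^2=(\sum_{i,j\in[n-m]}c_{ij})^2$ to eliminate the four-fold restricted sum yields
\[
\E S^2=\frac{A}{n-m}+\frac{B}{n-m-1}+\frac{E^2-\sum_i C_i^2-\sum_j D_j^2}{(n-m)(n-m-1)}.
\]

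For the upper bound I would drop the nonpositive $-\sum_i C_i^2-\sum_j D_j^2$. Bounding $A\le\sigma_{\mathrm{tot}}^2$ and $B\le c_{\mathrm{tot}}^2$ and parametrizing by $\alpha=\sigma_{\mathrm{tot}}^2/n$, $\beta=c_{\mathrm{tot}}^2/(n-1)$ with $\alpha+\beta=1$ from \eqref{1.2a}, a one-parameter optimization gives $\frac{A}{n-m}+\frac{B}{n-m-1}\le\frac{n-1}{n-m-1}\le\frac{n}{n-5}$. For the $E^2$ term the one slightly subtle step is that, applying $\sum_{j=1}^n c_{ij}=0$ summed over $i\in\{n-m+1,\ldots,n\}$ and then $\sum_{i=1}^n c_{ij}=0$, one chains both marginal conditions to obtain the identity $E=\sum_{i,j\in\{n-m+1,\ldots,n\}}c_{ij}$, so $E$ is really a sum over the $m\times m$ corner block. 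Cauchy-Schwarz over only $m^2$ terms then gives $E^2\le m^2(n-1)$, and an algebraic check confirms $\frac{m^2(n-1)}{(n-m)(n-m-1)}\le\frac{24n}{(n-5)^2}$ for $m\le 4$, $n\ge 6$.

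For the lower bound I would drop $E^2\ge 0$. Writing $A=\sigma_{\mathrm{tot}}^2-\sigma_{\mathrm{extra}}^2$ and $B=c_{\mathrm{tot}}^2-c_{\mathrm{extra}}^2$, minimizing the same linear combination used above yields $\frac{\sigma_{\mathrm{tot}}^2}{n-m}+\frac{c_{\mathrm{tot}}^2}{n-m-1}\ge\frac{n}{n-m}\ge\frac{n-1}{n-2}$. Hölder on the at most $2mn-m^2\le 8n$ extra entries, together with $\gamma\le 1/c_0$, gives $\sigma_{\mathrm{extra}}^2+c_{\mathrm{extra}}^2\le(8n)^{1/3}(n/c_0)^{2/3}=2n/c_0^{2/3}$; splitting $\frac{c_{\mathrm{extra}}^2}{n-m-1}=\frac{c_{\mathrm{extra}}^2}{n-m}+\frac{c_{\mathrm{extra}}^2}{(n-m)(n-m-1)}$ extracts the $\frac{2n}{(n-4)c_0^{2/3}}$ contribution cleanly, leaving only an $O(1/n)$ residue. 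For $\sum_i C_i^2+\sum_j D_j^2$, Cauchy-Schwarz applied to $C_i=-\sum_{j\in\mathrm{top}}c_{ij}$ and $D_j=-\sum_{i\in\mathrm{top}}c_{ij}$, combined with the disjointness of the two rectangular regions inside ``extra'', yields $\sum_i C_i^2+\sum_j D_j^2\le m\cdot c_{\mathrm{extra}}^2\le m(n-1)$; the residue and this contribution fit into the $\frac{24n}{(n-5)^2}$ budget by a short algebraic check.

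The main obstacle is the $E^2$ bound in the upper estimate: a direct Cauchy-Schwarz via the $(n-m)m$-term representation $E=-\sum_{i\in[n-m],\,j\in\mathrm{top}}c_{ij}$ only gives $E^2\le m(n-m)(n-1)$, which contributes $O(1)$ rather than $O(1/n)$ and breaks the target bound. Chaining both zero-marginal conditions in \eqref{1.1} to collapse $E$ to the compact $m\times m$ corner sum is the essential trick that forces the required $O(1/n)$ correction.
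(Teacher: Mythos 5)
Your proposal is correct and follows essentially the same route as the paper: the same exact expansion of $\E S^2$ via conditioning and inclusion--exclusion (your $\frac{E^2-\sum_i C_i^2-\sum_j D_j^2}{(n-m)(n-m-1)}$ is exactly the paper's cross term after using \eqref{1.1}), the same key identity collapsing $E$ to the $m\times m$ corner block by chaining both zero-marginal conditions, the same Cauchy--Schwarz bounds of order $m(n-1)$ and $m^2(n-1)$, and the same H\"older step producing $2n/c_0^{2/3}$ in the lower bound. The only differences are cosmetic refinements (dropping terms of known sign instead of bounding them in absolute value, and a slightly sharper one-parameter optimization), which still land inside the same $\frac{24n}{(n-5)^2}$ budget.
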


\begin{pf}
Writing $\sum_{1\le i\ne j\le n-m}\sum_{1\le k\ne l\le n-m}$ as
\begin{eqnarray*}
\sum_{1\le i,j,k,l \le n-m}-\sum_{1\le i=j\le n-m}
\sum_{1\le k,l\le
n-m}-\sum_{1\le i,j\le n-m}\sum
_{1\le k=l\le n-m}+\sum_{1\le i=j\le
n-m}\sum
_{1\le k=l\le n-m}
\end{eqnarray*}
and using the assumption \eq{1.1},
%
\begin{eqnarray}
\label{210} \E S^2 &=& \E\Biggl(\sum_{i=1}^{n-m}X_{i\tau(i)}
\Biggr)^2=\sum_{i=1}^{n-m} \E
X_{i\tau
(i)}^2 +\sum_{1\le i\ne j\le n-m} \E
X_{i\tau(i)}X_{j\tau(j)}\nonumber
\\
&=&\frac{1}{n-m}\sum_{i,j=1}^{n-m}
\E X_{ij}^2 +\frac
{1}{(n-m)_{(2)}}\sum
_{1\le i\ne j \le n-m} \sum_{1\le k\ne l\le n-m} \E
X_{ik}X_{jl}\nonumber
\\
&=&\frac{1}{n-m}\sum_{i,j=1}^{n-m}
\bigl(c_{ij}^2+\sigma_{ij}^2\bigr)+
\frac
{1}{(n-m)_{(2)}}\sum_{1\le i\ne j\le n-m} \sum
_{1\le k\ne l\le n-m} c_{ik}c_{jl}
\nonumber\\
&=&\frac{1}{n-m}\sum_{i,j=1}^{n-m}
\sigma_{ij}^2+\frac{1}{n-m}\sum
_{i,j=1}^{n-m}c_{ij}^2
\\
&&{} +\frac{1}{(n-m)_{(2)}}\sum_{i,k=1}^{n-m}
c_{ik} \Biggl(\sum_{j,l=1}^{n-m}c_{jl}-
\sum_{l=1}^{n-m}c_{il}-\sum
_{j=1}^{n-m} c_{jk}+c_{ik}\Biggr)
\nonumber\\
&=&\frac{1}{n-m}\sum_{i,j=1}^{n-m}
\sigma_{ij}^2+\frac{1}{n-m-1}\sum
_{i,j=1}^{n-m}c_{ij}^2\nonumber
\\
&&{} +\frac{1}{(n-m)_{(2)}}\sum_{i,j=1}^{n-m}
c_{ij}\Biggl(\sum_{k,l=n-m+1}^n
c_{kl}+\sum_{k=n-m+1}^n
c_{ik}+\sum_{l=n-m+1}^n
c_{lj} \Biggr)\nonumber
\end{eqnarray}
with the falling factorial notation $(n-m)_{(2)}:=(n-m)(n-m-1)$. Under
the assumption \eq{1.2a}, $\E S^2$ is close to $1$ intuitively. We
quantify it as follows.
From \eq{1.1} and \eq{1.2a},
\begin{eqnarray*}
\Biggl|\sum_{i,j=1}^{n-m}c_{ij}\Biggl(\sum
_{k=n-m+1}^n c_{ik}\Biggr)\Biggr|=\Biggl|\sum
_{i=1}^{n-m}\Biggl(\sum
_{k=n-m+1}^n c_{ik}\Biggr)^2\Biggr|\le\Biggl|
\sum_{i=1}^{n-m} m\sum
_{k=n-m+1}^n c_{ik}^2\Biggr|\le m(n-1).
\end{eqnarray*}
Similarly,
\begin{eqnarray*}
\Biggl|\sum_{i,j=1}^{n-m} c_{ij} \sum
_{l=n-m+1}^n c_{lj}\Biggr|\le m(n-1).
\end{eqnarray*}
Moreover,
\begin{eqnarray*}
\Biggl|\sum_{i,j=1}^{n-m}c_{ij}\sum
_{k,l=n-m+1}^n c_{kl} \Biggr|=\Biggl|\Biggl(\sum
_{k,l=n-m+1}^n c_{kl}
\Biggr)^2\Biggr|\le m^2\sum_{k,l=n-m+1}^n
c_{kl}^2 \le m^2 (n-1).
\end{eqnarray*}
Therefore, from \eq{210},
\begin{eqnarray*}
\E S^2\le\frac{1}{n-m-1} \sum_{i,j=1}^n
\bigl(\sigma_{ij}^2+c_{ij}^2\bigr) +
\frac{(2m+m^2)(n-1)}{(n-m)_{(2)}}\le\frac{n}{n-5}+\frac{24n}{(n-5)^2}.
\end{eqnarray*}
Now we prove the lower bound. Since $\gamma\le1/c_0$, using H\"
older's inequality, we have
\begin{eqnarray*}
\sum_{1\le i, j\le n: i> n-m \atop\mathrm{or\ } j> n-m} \bigl(\sigma _{ij}^2+c_{ij}^2
\bigr)=\sum_{1\le i, j\le n: i> n-m \atop\mathrm{or\ }  j>
n-m} \E X_{ij}^2
\le(8n)^{1/3} \Biggl(\sum_{i,j=1}^n
\E|X_{ij}|^3\Biggr)^{2/3}\le 2n/c_0^{2/3}.
\end{eqnarray*}
Therefore,
\begin{eqnarray*}
\E S^2 &\ge&\frac{1}{n-m} \sum
_{i,j=1}^{n-m} \bigl(\sigma _{ij}^2+c_{ij}^2
\bigr)-\frac{24n}{(n-5)^2}
\\
&=&\frac{1}{n-m} \sum_{i,j=1}^n \bigl(
\sigma_{ij}^2+c_{ij}^2\bigr)-
\frac
{1}{n-m} \sum_{1\le i, j\le n: i> n-m \atop\mathrm{or\ }  j> n-m} \bigl(
\sigma_{ij}^2+c_{ij}^2\bigr) -
\frac{24n}{(n-5)^2}
\\
&\ge&\frac{n-1}{n-2} -\frac{2n}{(n-4)c_0^{2/3}} -\frac{24n}{(n-5)^2}. %
\end{eqnarray*}
\upqed
\end{pf}
In the next proposition, we provide a concentration inequality for $S$.

\begin{prop}\label{5-p1}
Let $S$ be defined by \eq{2.14} for some $m\in\{2,3,4\}$. Suppose
$\gamma\le1/c_0$ where $\gamma$ was defined in \eq{t1-2}, and $c_0$
and $n\ge6$ are large enough to satisfy
%
\begin{equation}
\label{5-p1-0} \theta:=\frac{1}{2}-\frac{2n}{(n-4)c_0^{2/3}}-
\frac
{24n}{(n-5)^2}-\frac{4\sqrt{n}}{n-4}\sqrt{\frac{n}{n-5}+
\frac
{24n}{(n-5)^2}}>0. %
\end{equation}
Then for all $a<b$,
%
\begin{equation}
\label{5-p1-1} \P\bigl(S\in[a,b]\bigr) \le c_1 (b-a) +c_2
\gamma,
\end{equation}
where
%
\begin{equation}
\label{5-c1} c_1= \biggl( \frac{1}{2}\sqrt{
\frac{n}{n-5}+\frac
{24n}{(n-5)^2}}+\frac{2\sqrt{n}}{n-4} \biggr) \bigl/\theta
\end{equation}
and
%
\begin{equation}
\label{5-c2} c_2=\frac{64n}{n-4}c_1+ \biggl\{
\biggl[ \frac{8n}{(n-4)^2}+\frac
{16n}{n-4}+32\biggl(\frac{n}{n-4}
\biggr)^3 \biggr] \biggl[ \frac{32n}{n-4} \biggr] \biggr
\}^{1/2} \bigl/ \theta.
\end{equation}
\end{prop}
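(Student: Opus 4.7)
My approach is to apply Lemma \ref{l1} to $S$, building the exchangeable pair by a random transposition of $\tau$. Let $(I,J)$ be uniform on ordered pairs with $I\ne J$ in $[n-m]$, independent of $(\mathbb{X},\tau)$, set $\tau'=\tau\circ(I,J)$, and define $S'=\sum_{i=1}^{n-m}X_{i\tau'(i)}$. Right-invariance of the uniform law on permutations makes $(S,S')$ exchangeable, with $S'-S=X_{I\tau(J)}+X_{J\tau(I)}-X_{I\tau(I)}-X_{J\tau(J)}$. Averaging over $(I,J)$ and using $\sum_{i\ne j}X_{i\tau(j)}=\sum_{i,k=1}^{n-m}X_{ik}-S$ (and symmetrically for $X_{j\tau(i)}$), I obtain
\be{\E[S'-S\mid\mathbb{X},\tau]=-\lambda S+\tfrac{2}{(n-m)(n-m-1)}U,\qquad U:=\sum_{i,k=1}^{n-m}X_{ik},}
with $\lambda=2/(n-m-1)$, and conditioning further on $S$ yields the approximate linearity condition \eqref{l1-1} with $R=\tfrac{2}{(n-m)(n-m-1)}\,\E[U\mid S]$.

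Next I would estimate the ingredients of Lemma \ref{l1}. Lemma \ref{momentlemma} controls $\E S^2$ from above and below (and therefore both $\E|S|\le\sqrt{\E S^2}$ and the denominator $\E S^2-\E|SR|/\lambda-1/2$). For $U$, assumption \eqref{1.1} forces $\E U=\sum_{i,k=n-m+1}^n c_{ik}$, so \eqref{1.2a} and Cauchy--Schwarz give $(\E U)^2\le m^2(n-1)$; combined with $\Var(U)=\sum_{i,k\le n-m}\sigma_{ik}^2\le n$, this bounds $\E U^2$ by a multiple of $n$. Consequently $\E|R|/\lambda\le\sqrt{\E U^2}/(n-m)$ and, by Cauchy--Schwarz, $\E|SR|/\lambda\le\sqrt{\E S^2\,\E U^2}/(n-m)$; these produce the $\frac{4\sqrt{n}}{n-4}$ and $\frac{4\sqrt{n}}{n-4}\sqrt{\E S^2}$ contributions visible in $c_1$ and $\theta$. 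Finally, $\delta=\E|S'-S|^3/\lambda$ is controlled by $(a+b+c+d)^3\le 16(a^3+b^3+c^3+d^3)$ applied to the four summands of $S'-S$, each of which (by symmetry of the randomization) has third moment at most $\sum_{i,k=1}^{n-m}\E|X_{ik}|^3/(n-m)^2\le n\gamma/(n-m)^2$, yielding $\delta\le 32n\gamma/(n-4)$.

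The main obstacle is the variance term $\Var(\E[(S'-S)^2 I(|S'-S|\le\delta)/(2\lambda)\mid S])$. The crude estimate $\Var\le\E Y^2\le\delta^2/(4\lambda)$ is too weak: $\delta^2/\lambda$ is of order $n^2\gamma^2$, which would leave a spurious $\sqrt{n}$ factor. I would instead use the contraction of conditional variance to pass from $\sigma(S)$ up to $\sigma(\mathbb{X},\tau)$, reducing the problem to the variance of the pairwise sum
\be{\frac{1}{2\lambda(n-m)(n-m-1)}\sum_{i\ne j}D_{ij}^2\,I(|D_{ij}|\le\delta),\qquad D_{ij}:=X_{i\tau(j)}+X_{j\tau(i)}-X_{i\tau(i)}-X_{j\tau(j)}.}
The variance of this sum is then decomposed according to how two index pairs $(i,j)$ and $(i',j')$ overlap; the truncation at $\delta$ keeps every contribution within third moments of the $X_{ik}$, and the three overlap regimes (no coincidence, one coincidence, full coincidence) are expected to produce the three summands $32(n/(n-4))^3$, $\frac{16n}{n-4}$, and $\frac{8n}{(n-4)^2}$ that appear inside the square root defining $c_2$.

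Substituting all of these bounds into Lemma \ref{l1}, with hypothesis \eqref{5-p1-0} ensuring that $\E S^2-\E|SR|/\lambda-1/2\ge\theta>0$, produces \eqref{5-p1-1}. Splitting $\frac{b-a}{2}+\delta$ into the $(b-a)$-part (producing $c_1$) and the $\delta\le(32n/(n-4))\gamma$ part (producing the $\frac{64n}{n-4}c_1$ piece of $c_2$), and adjoining the variance contribution divided by $\theta$, gives the stated expressions for $c_1$ and $c_2$.
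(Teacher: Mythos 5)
Your construction of the exchangeable pair, the identification of $\lambda=2/(n-m-1)$ and $R=\frac{2}{(n-m)(n-m-1)}\E[U\mid S]$, the bounds on $\E|R|/\lambda$, $\E|SR|/\lambda$ and $\delta$, and the use of Lemma \ref{l1} together with Lemma \ref{momentlemma} all coincide with the paper's proof, and you correctly identify both the need to pass the conditional variance up to $\sigma(\mathbb{X},\tau)$ and the decomposition of the resulting variance by the overlap pattern of the index pairs. There is, however, one genuine gap, and it sits exactly at the crux of the proposition: the treatment of the ``no coincidence'' regime $|\{i,j,i',j'\}|=4$. Your stated mechanism --- ``the truncation at $\delta$ keeps every contribution within third moments'' --- is sufficient for the full- and one-coincidence regimes (where there are only $O(n^2)$ and $O(n^3)$ terms), but it fails for the $O(n^4)$ disjoint quadruples. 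A direct estimate such as $|\cov(\alpha_{ij}^\tau,\alpha_{i'j'}^\tau)|\le \tfrac12(\Var(\alpha_{ij}^\tau)+\Var(\alpha_{i'j'}^\tau))\le \tfrac{\delta}{2}(\E|D_{ij}|^3+\E|D_{i'j'}|^3)$ summed over these quadruples and divided by the normalization $16(n-m)^2$ gives a contribution of order $n\delta\gamma$ rather than the needed $\delta\gamma$, so $B_0$ would come out of order $\sqrt{n}\,\gamma$ and the final concentration bound would carry a spurious $\sqrt{n}$.

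What rescues this regime is not a moment bound but a cancellation coming from the weak dependence induced by $\tau$. For distinct $i,j,i',j'$, conditioning on $\tau$ and using the independence of the $X_{ik}$ over disjoint cells, one writes
\bes{
\cov (\alpha_{ij}^\tau, \alpha_{i'j'}^\tau)
&= \frac{4(n-m)-6}{(n-m)_{(2)}(n-m)_{(4)}} \sum_{1\le k,l,k',l'\le n-m,\atop |\{k,l,k',l'\}|=4} \E \alpha_{ij}^{kl}\,\E \alpha_{i'j'}^{k'l'} \\
&\quad -\frac{1}{[(n-m)_{(2)}]^2}\sum_{1\le k,l,k',l'\le n-m,\atop k\ne l,\ k'\ne l',\ |\{k,l,k',l'\}|\le 3} \E \alpha_{ij}^{kl}\,\E \alpha_{i'j'}^{k'l'},
}
where $\alpha_{ij}^{kl}=(X_{ik}+X_{jl}-X_{il}-X_{jk})^2I(|X_{ik}+X_{jl}-X_{il}-X_{jk}|\le \delta)$: the product of expectations over fully distinct column quadruples appears with the weight $\frac{1}{(n-m)_{(4)}}-\frac{1}{[(n-m)_{(2)}]^2}=O(n^{-5})$ rather than $O(n^{-4})$, and the remaining sum is restricted to $O(n^3)$ coinciding column quadruples. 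This extra factor of $1/n$ is precisely what brings $R_3$ down to $32(n/(n-4))^3\delta\gamma$ and makes the constant $c_2$ in \eq{5-c2} come out finite and $n$-free. You should carry out this covariance computation explicitly; without it the proposed argument does not close.
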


\begin{pf}
For any $m\in\{2,3,4\}$, we construct an exchangeable pair $(S,S')$ by
uniformly selecting two different indices $I,J\in[n-m]$ and letting
$S'=S-X_{I\tau(I)}-X_{J\tau(J)}+X_{I\tau(J)}+X_{J\tau(I)}$. An
approximate linearity condition with an error term can be established as
%
\begin{eqnarray}
\label{1.6} &&\E\bigl(S'-S|S\bigr)\nonumber
\\
&&\quad=\frac{1}{(n-m)_{(2)}}\sum_{1\le i, j \le n-m}\E \bigl\{
[X_{i\tau(j)}+X_{j\tau(i)} ]- [X_{i\tau(i)}+X_{j\tau
(j)} ] |S
\bigr\}\nonumber
\\[-8pt]\\[-8pt]
&&\quad=\frac{1}{(n-m)_{(2)}} \E \Biggl\{ 2\sum_{i,j=1}^{n-m}X_{ij}
-2(n-m)S \Bigl| S \Biggr\}\nonumber
\\
&&\quad=-\lambda S+R, \nonumber%
\end{eqnarray}
where $\lambda=2/(n-m-1)$ and
\begin{eqnarray*}
R=\frac{2}{(n-m)_{(2)}}\E\Biggl(\sum_{i,j=1}^{n-m}
X_{ij}\Bigl|S\Biggr).
\end{eqnarray*}
To apply the concentration inequality in Lemma~\ref{l1}, we need to:
\begin{enumerate}[3.]
\item[1.] Bound $\sqrt{\E R^2}/\lambda$.

\item[2.] Bound
%
\begin{equation}
\label{209} \delta=\frac{\E|S'-S|^3}{\lambda}.
\end{equation}

\item[3.] Bound\vspace*{-1pt}
\begin{eqnarray*}
B_0=\sqrt{\Var \biggl( \E \biggl( \frac{1}{2\lambda}
\bigl(S'-S\bigr)^2 I\bigl(\bigl|S'-S\bigr|\le\delta
\bigr) \bigl| S \biggr) \biggr)}.\vspace*{-1pt}
\end{eqnarray*}
\end{enumerate}

First,\vspace*{-2pt}
\begin{eqnarray*}
&&\sqrt{\E R^2}
\\[-1pt]
&&\quad= \frac{2}{(n-m)_{(2)}}\sqrt{\E\Biggl(\E\Biggl(\sum
_{i,j=1}^{n-m} X_{ij}\Bigl|S\Biggr)
\Biggr)^2}\\[-1pt]
&&\quad  = \frac{2}{(n-m)_{(2)}} \sqrt{\Var\Biggl(\E\Biggl(
\sum_{i,j=1}^{n-m}X_{ij}\Bigl|S\Biggr)
\Biggr)+\Biggl(\E\sum_{i,j=1}^{n-m}X_{ij}
\Biggr)^2}
\\[-1pt]
&&\quad\le\frac{2}{(n-m)_{(2)}} \sqrt{\Var\Biggl(\sum
_{i,j=1}^{n-m}X_{ij}\Biggr)+\Biggl(\sum
_{i,j=1}^{n-m}c_{ij}\Biggr)^2}\\[-1pt]
&&\quad  =
\frac
{2}{(n-m)_{(2)}} \sqrt{\sum_{i,j=1}^{n-m}
\sigma_{ij}^2+\Biggl(\sum_{i,j=n-m+1}^{n}c_{ij}
\Biggr)^2}
\\[-1pt]
&&\quad\le\frac{2}{(n-m)_{(2)}} \sqrt{\sum_{i,j=1}^{n-m}
\sigma _{ij}^2+m^2\sum
_{i,j=n-m+1}^{n}c_{ij}^2} \\[-1pt]
&&\quad \le
\frac
{2}{(n-m)_{(2)}}\sqrt{m^2\Biggl(\sum
_{i,j=1}^n \sigma_{ij}^2 +\sum
_{i,j=1}^n c_{ij}^2
\Biggr)}
\le\frac{2m\sqrt{n}}{(n-m)_{(2)}}, %
\end{eqnarray*}
where we used the assumptions \eq{1.1} and \eq{1.2a}. Therefore,\vspace*{-1pt}
%
\begin{equation}
\label{2110} \frac{\sqrt{\E R^2}}{\lambda}\le\frac{4\sqrt{n}}{n-4}.
\end{equation}
Next, we bound $\delta$ of \eq{209}.
From the fact that\vspace*{-1pt}
%
\begin{eqnarray}
\label{2.31} && |X_{i\tau(j)}+X_{j\tau(i)}-X_{i\tau(i)}-X_{j\tau(j)}|^3\nonumber
\\[-9pt]\\[-9pt]
&&\quad\le16 \bigl( |X_{i\tau(j)}|^3+ |X_{j\tau(i)}|^3+
|X_{i\tau(i)}|^3+ |X_{j\tau(j)}|^3\bigr),\nonumber
\end{eqnarray}
we have
\begin{eqnarray*}
&&\E|S'-S|^3
\\
&&\quad=\E\frac{1}{(n-m)_{(2)}}\sum_{1\le i, j \le n-m}\E \bigl(
|X_{i\tau(i)}+X_{j\tau(j)}-X_{i\tau(j)}-X_{j\tau(i)}|^3
| \mathbb{X} \bigr)
\\
&&\quad\le\E\frac{16}{(n-m)_{(2)}} \sum_{1\le i, j\le n-m}\E \bigl( {
\bigl(}|X_{i\tau(i)}|^3+|X_{j\tau(j)}|^3+|X_{i\tau(j)}|^3+|X_{j\tau
(i)}|^3
{\bigr)} | \mathbb{X} \bigr)
\\
&&\quad\le\frac{64n}{(n-m)_{(2)}}\gamma. %
\end{eqnarray*}
Therefore,
%
\begin{equation}
\label{1.7} \delta\le\frac{32n}{n-4} \gamma.
\end{equation}
Now we turn to the final step of bounding $B_0$. Denote
\begin{eqnarray*}
\alpha_{ij}^{\tau}=(X_{i\tau(i)}+X_{j \tau(j)}-X_{i\tau
(j)}-X_{j\tau(i)})^2I\bigl(|X_{i\tau(i)}+X_{j \tau(j)}-X_{i\tau
(j)}-X_{j\tau(i)}|
\le\delta\bigr).
\nonumber
\end{eqnarray*}
We have
\begin{eqnarray*}
\frac{1}{2\lambda} \E \bigl( \bigl(S'-S\bigr)^2I
\bigl(\bigl|S'-S\bigr|\le\delta\bigr) | \mathbb{X},\tau \bigr)=
\frac{1}{4(n-m)} \sum_{1\le i\ne j \le n-m} \alpha_{ij}^{\tau}.
\end{eqnarray*}
Therefore, with $|\cdot|$ denoting cardinality when applied to a
subset of $[n]$,
\begin{eqnarray*}
B_0^2&=&\Var \biggl( \E \biggl(
\frac{1}{2\lambda}\bigl(S'-S\bigr)^2 I
\bigl(\bigl|S'-S\bigr|\le\delta\bigr) | S \biggr) \biggr)
\\
&\le&\Var \biggl( \frac{1}{2\lambda} \E \bigl( \bigl(S'-S
\bigr)^2I\bigl(\bigl|S'-S\bigr|\le \delta\bigr) | \mathbb{X},\tau
\bigr)\biggr)
\\
&=&\frac{1}{16(n-m)^2} \biggl\{ 2\sum_{1\le i\ne j \le n-m} \Var
\bigl(\alpha_{ij}^\tau\bigr) +\sum
_{1\le i,j,i',j'\le n-m,\atop i\ne j, i'\ne
j', |\{i,j,i',j'\}|=3} \cov\bigl(\alpha_{ij}^{\tau},
\alpha_{i'j'}^{\tau
}\bigr)
\\
&&\hphantom{\frac{1}{16(n-m)^2} \biggl\{}{} + \sum_{1\le i,j,i',j'\le n-m,\atop|\{i,j,i',j'\}|=4} \cov \bigl(\alpha
_{ij}^{\tau}, \alpha_{i'j'}^{\tau}\bigr) {
\biggr\}}
\\
&:=& R_1 +R_2+R_3. %
\end{eqnarray*}
The terms $R_1$ and $R_2$ are easy to bound.
%
\begin{equation}
\label{216} |R_1|\le\frac{2}{16(n-m)^2}\sum
_{i,j=1}^{n-m} \delta\E|X_{i\tau
(i)}+X_{j\tau(j)}-X_{i\tau(j)}-X_{j\tau(i)}|^3
\le\frac{8n\delta
}{(n-4)^2}\gamma.
\end{equation}
From $\cov(X, Y)\le(\Var(X)+\Var(Y))/2$, \eq{2.31} and the
restriction that $i\ne j, i'\ne j',\allowbreak  |\{i,j,i',j'\}|=3$, we have
%
\begin{eqnarray}
\label{217} |R_2| &\le&\frac{\delta}{16(n-m)^2} \sum
_{1\le i,j,i',j'\le
n-m,\atop i\ne j, i'\ne j', |\{i,j,i',j'\}|=3} \E|X_{i\tau
(i)}+X_{j\tau(j)}-X_{i\tau(j)}-X_{j\tau(i)}|^3\nonumber
\\
&\le&\frac{\delta}{(n-m)^2} \sum_{1\le i,j,i',j'\le n-m,\atop i\ne
j, i'\ne j', |\{i,j,i',j'\}|=3} \bigl(
\E|X_{i\tau(i)}|^3+\E|X_{j\tau
(j)}|^3+
\E|X_{i\tau(j)}|^3 + \E|X_{j\tau(i)}|^3 \bigr)\nonumber
\\[-8pt]\\[-8pt]
&=& \frac{8\delta(n-m-2)}{(n-m)^2} \Biggl[ (n-m-1) \sum_{i=1}^{n-m}
\E |X_{i\tau(i)}|^3 + \sum_{1\le i\ne j\le n-m}
\E|X_{i\tau(j)}|^3 \Biggr]\nonumber
\\
&\le&\frac{16n\delta}{n-4}\gamma. \nonumber%
\end{eqnarray}
Let $\alpha
_{ij}^{kl}=(X_{ik}+X_{jl}-X_{il}-X_{jk})^2I(|X_{ik}+X_{jl}-X_{il}-X_{jk}|\le
\delta)$. For $|\{i,j,i',j'\}|=4$,
\begin{eqnarray*}
\cov\bigl(\alpha_{ij}^\tau,
\alpha_{i'j'}^\tau\bigr)&=& \E\alpha_{ij}^\tau
\alpha_{i'j'}^\tau-\E\alpha_{ij}^\tau\E
\alpha_{i'j'}^\tau\\
&=& \frac{1}{(n-m)_{(4)}}\sum
_{1\le k,l,k',l'\le n-m,\atop|\{k,l,k',l'\}
|=4} \E\alpha_{ij}^{kl}
\alpha_{i'j'}^{k'l'}
\\
&&{}- \biggl[ \frac{1}{(n-m)_{(2)}} \sum_{1\le k\ne l\le n-m} \E
\alpha_{ij}^{kl} \biggr] \biggl[\frac{1}{(n-m)_{(2)}}\sum
_{1\le k'\ne
l'\le n-m}\E\alpha_{i'j'}^{k'l'} \biggr]
\\
&=& \frac{1}{(n-m)_{(4)}}\sum_{1\le k,l,k',l'\le n-m,\atop|\{
k,l,k',l'\}|=4} \E
\alpha_{ij}^{kl}\E\alpha_{i'j'}^{k'l'} \\
&&{}-
\frac
{1}{[(n-m)_{(2)}]^2}\sum_{1\le k,l,k',l'\le n-m,\atop|\{k,l,k',l'\}
|=4} \E\alpha_{ij}^{kl}
\E\alpha_{i'j'}^{k'l'}
\\
&&{} -\frac{1}{[(n-m)_{(2)}]^2}\sum_{1\le k,l,k',l'\le n-m,\atop k\ne l,
k'\ne l', |\{k,l,k',l'\}|\le3} \E
\alpha_{ij}^{kl}\E\alpha _{i'j'}^{k'l'}
\\
&=& \frac{4(n-m)-6}{(n-m)_{(2)}(n-m)_{(4)}} \sum_{1\le k,l,k',l'\le
n-m,\atop|\{k,l,k',l'\}|=4} \E
\alpha_{ij}^{kl}\E\alpha _{i'j'}^{k'l'}
\\
&&{} -\frac{1}{[(n-m)_{(2)}]^2}\sum_{1\le k,l,k',l'\le n-m,\atop k\ne l,
k'\ne l', |\{k,l,k',l'\}|\le3} \E
\alpha_{ij}^{kl}\E\alpha_{i'j'}^{k'l'}.
\end{eqnarray*}
Therefore,
%
\begin{eqnarray}
\label{218} |R_3|&\le&\frac{1}{16(n-m)^2} \sum
_{1\le i,j,i',j'\le n-m,\atop|\{
i,j,i',j'\}|=4} \biggl[ \frac{4(n-m)-6}{(n-m)_{(2)}(n-m)_{(4)}} \sum
_{1\le k,l,k',l'\le n-m,\atop|\{k,l,k',l'\}|=4} \frac{\E(\alpha
_{ij}^{kl})^2+\E(\alpha_{i'j'}^{k'l'})^2}{2}\nonumber
\\
& &\hphantom{\frac{1}{16(n-m)^2} \sum
_{1\le i,j,i',j'\le n-m,\atop|\{
i,j,i',j'\}|=4} \biggl[}{}+\frac{1}{[(n-m)_{(2)}]^2}\sum_{1\le k,l,k',l'\le n-m,\atop k\ne l,
k'\ne l', |\{k,l,k',l'\}|\le3}
\frac{\E(\alpha_{ij}^{kl})^2+\E
(\alpha_{i'j'}^{k'l'})^2}{2} \biggr]\nonumber
\\
&\le&\frac{1}{16(n-m)^2} \nonumber \\
&&{}\times\sum_{1\le i,j,i',j'\le n-m,\atop|\{
i,j,i',j'\}|=4}\biggl[
\frac{4}{(n-m)(n-m-1)^2}\nonumber
\\
&&\hphantom{{}\times\sum_{1\le i,j,i',j'\le n-m,\atop|\{
i,j,i',j'\}|=4}\biggl[}{} \times\biggl(\sum_{1\le k\ne l\le n-m} \E\bigl(
\alpha_{ij}^{kl}\bigr)^2/2+\sum
_{1\le k'\ne l'\le n-m} \E\bigl(\alpha_{i'j'}^{k'l'}\bigr)/2
\biggr)
\\
&&\hphantom{{}\times\sum_{1\le i,j,i',j'\le n-m,\atop|\{
i,j,i',j'\}|=4}\biggl[}{} + \frac{4}{(n-m)(n-m-1)^2}\nonumber\\
&&\hphantom{{}\times\sum_{1\le i,j,i',j'\le n-m,\atop|\{
i,j,i',j'\}|=4}\biggl[{}+}{}\times\biggl(\sum_{1\le k\ne l\le n-m} \E\bigl(
\alpha _{ij}^{kl}\bigr)^2/2 +\sum
_{1\le k'\ne l'\le n-m} \E\bigl(\alpha _{i'j'}^{k'l'}
\bigr)^2/2\biggr) \biggr]\nonumber
\\[-1pt]
&\le&\frac{1}{2(n-m)^3(n-m-1)^2} \sum_{1\le i,j,i',j'\le n-m,\atop
|\{
i,j,i',j'\}|=4} \sum
_{1\le k\ne l\le n-m} \E\bigl(\alpha_{ij}^{kl}
\bigr)^2\nonumber
\\[-1pt]
&\le&\frac{\delta}{2(n-m)^3} \sum_{i,j,k,l=1}^n \E
|X_{ik}+X_{jl}-X_{il}-X_{jk}|^3
\le32\biggl(\frac{n}{n-4}\biggr)^3\delta\gamma.\nonumber %
\end{eqnarray}
From \eq{216}, \eq{217}, \eq{218}, and then applying \eq{1.7}, we obtain\vspace*{-1pt}
%
\begin{eqnarray}
\label{2112} B_0^2&\le& \biggl[
\frac{8n}{(n-4)^2}+\frac{16n}{n-4}+32\biggl(\frac
{n}{n-4}
\biggr)^3 \biggr] \delta\gamma\nonumber
\\[-8pt]\\[-8pt]
&\le& \biggl[ \frac{8n}{(n-4)^2}+\frac{16n}{n-4}+32\biggl(\frac{n}{n-4}
\biggr)^3 \biggr] \frac{32n \gamma^2}{n-4}.\nonumber %
\end{eqnarray}
Now we are ready to obtain a concentration inequality for $S$ using
Lemma~\ref{l1}. From \eq{l1-2}, and applying the bounds \eq{2110},
\eq{ES2}, \eq{1.7}, \eq{2112}, we obtain\vspace*{-1pt}
\begin{eqnarray*}
&&P\bigl(S\in[a,b]\bigr)\\
&& \quad\le\Biggl(\biggl(\sqrt{\frac{n}{n-5}+\frac
{24n}{(n-5)^2}}+\frac{4\sqrt{n}}{n-4}\biggr)\\
&&\qquad\hphantom{\Biggl({}}{}\Bigl/\biggl(\frac{n-1}{n-2}-\frac
{2n}{(n-4)c_0^{2/3}}-\frac{24n}{(n-5)^2}\\
&&\qquad\hphantom{\Biggl({}\Bigl/\biggl(}{}-\frac{4\sqrt{n}}{n-4}\sqrt {\frac{n}{n-5}+\frac{24n}{(n-5)^2}}-\frac{1}{2}\biggr)\Biggr)\biggl(
\frac
{b-a}{2}+\delta\biggr)
\\
& &\qquad{}+\Biggl(\biggl(\sqrt{\frac{8n}{(n-4)^2}+\frac{16n}{n-4}+32\biggl(\frac
{n}{n-4}\biggr)^3}\sqrt{\frac{32n}{n-4}}\biggr)\\
&&\qquad\hphantom{{}+\Biggl(}{}\Bigl/\biggl(\frac{n-1}{n-2}-\frac
{2n}{(n-4)c_0^{2/3}}-\frac{24n}{(n-5)^2}-\frac{4\sqrt{n}}{n-4}\sqrt {\frac{n}{n-5}+\frac{24n}{(n-5)^2}}-\frac{1}{2}\biggr)\Biggr) \gamma
\\
&&\quad\le c_1(b-a) + c_2 \gamma. %
\end{eqnarray*}
\upqed
\end{pf}

\begin{rem}
Lemma~\ref{momentlemma} and Proposition~\ref{5-p1} still hold if S is
defined similarly as in (\ref{2.14}) but with any $m$ rows and $m$ columns removed
from the array $X$ where $m = {2, 3, 4}$.
\end{rem}

\begin{rem}
From Proposition~\ref{5-p1}, the error in Neammanee and Suntornchost \cite
{NeSu05} can be
corrected by conditioning on (using their notation)\vspace*{-1pt}
\begin{eqnarray*}
&J,K,L,M,\tau(J),\tau(K),\tau(L),\tau(M),&
\\
&\bigl\{\hat{X}_{ij}\dvt  i\in\bigl\{J,K,\tau^{-1}(L),
\tau^{-1}(M)\bigr\}, j\in\bigl\{ L,M,\tau(J),\tau(K)\bigr\}\bigr\},&
\end{eqnarray*}
and by applying our Proposition~\ref{5-p1} instead of their
Proposition~2.7.
\end{rem}

\section{Proof of the main result}\label{sec3}

From \eq{1.1}, $\E W=0$. The variance of $W$ can be calculated as
follows. From \eq{1.1},
\begin{eqnarray*}
\Var(W)&=&\Var\Biggl(\sum_{i=1}^n
X_{i\pi(i)}\Biggr)
\\
&=&\sum_{i=1}^n
\Var(X_{i\pi(i)})+\sum_{1\le i\ne j\le n}\cov
(X_{i\pi(i)}, X_{j\pi(j)})
\\
&=& \sum_{i=1}^n
\E(X_{i\pi(i)}-c_{i\cdot})^2+\sum
_{1\le
i\ne j\le
n} \E(X_{i\pi(i)}-c_{i\cdot})
(X_{j\pi(j)}-c_{j\cdot})
\\
&=&\frac{1}{n} \sum_{i,j=1}^n
\E(X_{ij}-c_{i\cdot})^2+\frac
{1}{n(n-1)}\sum
_{1\le i\ne j\le n}\sum_{1\le k\ne l\le n} \E
(X_{ik}-c_{i\cdot}) (X_{jl}-c_{j\cdot})
\\
&=&\frac{1}{n}\sum_{i,j=1}^n
\bigl(\sigma_{ij}^2+c_{ij}^2\bigr)+
\frac
{1}{n(n-1)}\sum_{1\le i\ne j\le n}\sum
_{1\le k\ne l \le
n}c_{ik}c_{jl}
\\
&=&\frac{1}{n}\sum_{i,j=1}^n
\bigl(\sigma_{ij}^2+c_{ij}^2\bigr)+
\frac
{1}{n(n-1)}\sum_{i,j=1}^n
c_{ij}^2
\\
&=&\frac{1}{n}\sum_{i,j=1}^n
\sigma_{ij}^2+\frac{1}{n-1}\sum
_{i,j=1}^n c_{ij}^2.
\end{eqnarray*}
This proves the first part of the theorem. In the following, we work
under the assumption that $\Var(W)=1$, that is,
%
\begin{equation}
\label{1.2b} \frac{1}{n}\sum_{i,j=1}^n
\sigma_{ij}^2+\frac{1}{n-1}\sum
_{i,j=1}^n c_{ij}^2=1.
\end{equation}
We assume $\gamma\le1/451$, that is, $c_0=451$ in Proposition~\ref
{5-p1}. Otherwise the bound \eq{t1-1} is obviously true. From \eq
{1.2b} and H\"older's inequality, we have\vspace*{1pt}
%
\begin{equation}
\label{301} n-1\le\sum_{i,j=1}^n \E
X_{ij}^2 \le n^{2/3}\Biggl(\sum
_{i,j=1}^n \E |X_{ij}|^3
\Biggr)^{2/3}=n^{4/3}\gamma^{2/3}.
\end{equation}
Therefore it suffices to prove Theorem~\ref{t1} for $n\geq203\,000$.
For $n\geq203\,000$ and $\gamma\le1/451$, \eq{5-p1-0} is satisfied,
and the concentration inequality \eq{5-p1-1} in Proposition~\ref
{5-p1} is applicable.

We follow the notation in Section~\ref{sec1} and construct an exchangeable pair
$(W,W')$ by uniformly selecting two different indices $I,J\in[n]$ (the
ranges of $I$ and $J$ are different from those in the proof of
Proposition~\ref{5-p1}) and let $W'=W-X_{I\pi(I)}-X_{J\pi
(J)}+X_{I\pi(J)}+X_{J\pi(I)}$. Following the argument as in \eq
{1.6}, we have\vspace*{1pt}
%
\begin{equation}
\label{1.3} \E\bigl(W'-W|W\bigr)=-\lambda W +R,
\end{equation}
where $\lambda=2/(n-1)$ and\vspace*{1pt}
\begin{eqnarray*}
R=\frac{2}{n(n-1)}\E\Biggl(\sum_{i,j=1}^n
X_{ij}\Bigl|W\Biggr).
\end{eqnarray*}
The following bound on $\sqrt{\E R^2}$\vspace*{1pt}
%
\begin{equation}
\label{1.4} \sqrt{\E R^2}\le\frac{2}{n(n-1)} \sqrt{\Var
\Biggl(\sum_{i,j=1}^n X_{ij}
\Biggr)}=\frac{2}{n(n-1)}\sqrt{\sum_{i,j=1}^n
\sigma_{ij}^2}\le \frac{2}{(n-1)\sqrt{n}}
\end{equation}
is obtained by using the assumptions \eq{1.1} and \eq{1.2b}.

From the fact that $(W,W')$ is an exchangeable pair and satisfies an
approximate linearity condition \eq{1.3}, the following functional
identity can be proved by the same argument as in~\eq{l1.0}.
%
\begin{equation}
\label{320} \E Wf(W)=\frac{1}{2\lambda} \E\bigl(W'-W\bigr)
\bigl(f\bigl(W'\bigr)-f(W)\bigr)+\frac{\E
Rf(W)}{\lambda}.
\end{equation}
Let $f$ be the bounded solution to the Stein equation\vspace*{1pt}
%
\begin{equation}
\label{1.9} f'(w)-wf(w)=I(w\le z)-\Phi(z).
\end{equation}
It is known that (Chen and Shao \cite{ChSh05})\vspace*{1.5pt}
%
\begin{equation}
\label{1.17} \bigl|f(w)\bigr|\le\frac{\sqrt{2\uppi}}{4},\qquad  \bigl|f'(w)\bigr|\le1 \qquad \forall w\in
\mathbb{R}
\end{equation}
and
%
\begin{equation}
\label{1.16} \bigl|(w+u)f(w+u)-(w+v)f(w+v)\bigr|\le\biggl(|w|+\frac{\sqrt{2\uppi}}{4}\biggr)
\bigl(|u|+|v|\bigr).
\end{equation}
From \eq{1.9} and \eq{320}, what we need to bound is
\begin{eqnarray*}
&&\P(W\le z)-\Phi(z)
\\
&&\quad=\E f'(W)-\E Wf(W)
\\
&&\quad=\E f'(W) \biggl(1-\frac{(W'-W)^2}{2\lambda}\biggr)+
\frac{1}{2\lambda}\E \bigl(W'-W\bigr)\int_0^{W'-W}
\bigl(f'(W)-f'(W+t)\bigr)\,\mathrm{d}t
\\
&&\qquad{} -\frac{\E Rf(W)}{\lambda}
\\
&&\quad:=R_1+R_2-R_3. %
\end{eqnarray*}
From \eq{1.4} and \eq{1.17}, and recalling $\lambda=2/(n-1)$, we have
%
\begin{equation}
\label{R3} |R_3|\le1/\sqrt{n}.
\end{equation}
To bound $R_1$ and $R_2$, we need the concentration inequality obtained
in the last section.

From \eq{1.3} and \eq{1.4},
%
\begin{equation}
\label{370} \E\bigl(W'-W\bigr)^2=2\lambda-2\E WR
\cases{ \le2\lambda+2\sqrt{\E R^2}\le
\displaystyle \frac{4}{n-1}\biggl(1+\frac{1}{\sqrt {n}}\biggr),
\vspace*{4pt}\cr
\ge2\lambda-2\sqrt{\E R^2}\ge\displaystyle \frac{4}{n-1}\biggl(1-
\displaystyle \frac{1}{\sqrt{n}}\biggr). } %
\end{equation}

We bound $R_2$ first. From \eq{1.9},
\begin{eqnarray*}
R_2&=& \frac{1}{2\lambda} \E\bigl(W'-W
\bigr)\int_0^{W'-W} \bigl(Wf(W)-(W+t)f(W+t)\bigr)\,\mathrm{d}t
\\
&&{} +\frac{1}{4n} \sum_{1\le i\ne j\le n}
\E(X_{i\pi(j)}+X_{j\pi
(i)}-X_{i\pi(i)}-X_{j\pi(j)})
\\
&&\hphantom{{} +\frac{1}{4n} \sum_{1\le i\ne j\le n}}{} \times\int_0^{X_{i\pi(j)}+X_{j\pi(i)}-X_{i\pi
(i)}-X_{j\pi(j)}} \bigl[ I(U\le
z-X_{i\pi(i)}-X_{j\pi(j)})
\\
&&\hphantom{{} +\frac{1}{4n} \sum_{1\le i\ne j\le n}{} \times\int_0^{X_{i\pi(j)}+X_{j\pi(i)}-X_{i\pi
(i)}-X_{j\pi(j)}} \bigl[}{} -I(U\le z-X_{i\pi(i)}-X_{j\pi(j)}-t) \bigr] \,\mathrm{d}t
\\
&:=& R_{2,1} +R_{2,2}, %
\end{eqnarray*}
where $U=\sum_{k\notin\{i,j\}} X_{k\pi(k)}$. Noting that $U$ is
independent of $\{X_{i\pi(i)}, X_{j\pi(j)}, X_{i\pi(j)}, X_{j\pi
(i)}\}$ given $\pi(i), \pi(j)$, and that the conditional distribution
of $U$ given $\pi(i), \pi(j)$ is the same as that of $S$ in \eq
{2.14} for $m=2$ under a relabeling of indices, we can apply the
concentration inequality \eq{5-p1-1} to obtain the following upper
bound on $|R_{2,2}|$.
\begin{eqnarray*}
|R_{2,2}|&\le&\frac{1}{4n} \sum
_{1\le i\ne j\le n} \E(X_{i\pi
(j)}+X_{j\pi(i)}-X_{i\pi(i)}-X_{j\pi(j)})
\\
&&\hphantom{\frac{1}{4n} \sum
_{1\le i\ne j\le n}}{} \times\int_0^{X_{i\pi(j)}+X_{j\pi(i)}-X_{i\pi
(i)}-X_{j\pi(j)}} I\bigl( - (t\vee0)
\le U-(z-X_{i\pi(i)}-X_{j\pi(j)})\\
&&\hphantom{\frac{1}{4n} \sum
_{1\le i\ne j\le n}{} \times\int_0^{X_{i\pi(j)}+X_{j\pi(i)}-X_{i\pi
(i)}-X_{j\pi(j)}} I\bigl(}\le- (t\wedge0)\bigr) \,\mathrm{d}t
\\
&\le&\frac{1}{4n}\sum_{1\le i\ne j\le n}
\E(X_{i\pi
(j)}+X_{j\pi
(i)}-X_{i\pi(i)}-X_{j\pi(j)})
\\
&&\hphantom{\frac{1}{4n}\sum_{1\le i\ne j\le n}}{} \times\int_0^{X_{i\pi(j)}+X_{j\pi(i)}-X_{i\pi
(i)}-X_{j\pi(j)}} \bigl(
c_1 |t|+c_2 \gamma \bigr)\, \mathrm{d}t
\\
&=& \frac{c_1}{8n}\sum_{1\le i\ne j\le n}
\E|X_{i\pi
(j)}+X_{j\pi
(i)}-X_{i\pi(i)}-X_{j\pi(j)}|^3
\\
&&{} +\frac{c_2 \gamma}{4n} \sum_{1\le i\ne j\le n} \E
|X_{i\pi
(j)}+X_{j\pi(i)}-X_{i\pi(i)}-X_{j\pi(j)}|^2
\\
&\le&8c_1 \gamma+c_2 \gamma\biggl(1+
\frac{1}{\sqrt{n}}\biggr). %
\end{eqnarray*}
In the last inequality, we used \eq{2.31} and
\begin{eqnarray*}
\sum_{1\le i\ne j\le n} \E|X_{i\pi(j)}+X_{j\pi(i)}-X_{i\pi
(i)}-X_{j\pi(j)}|^2&=&n(n-1)
\E\bigl(W'-W\bigr)^2
\\
&\le&4n\biggl(1+\frac{1}{\sqrt{n}}\biggr) %
\end{eqnarray*}
which follows from \eq{370}.
For $R_{2,1}$, from the property \eq{1.16} of $f$ with $w=U$,
$u=X_{i\pi(i)}+X_{j\pi(j)}$, $v=X_{i\pi(i)}+X_{j\pi(j)}+t$,
\begin{eqnarray*}
|R_{2,1}|&\le&\frac{1}{4n} \sum
_{1\le i\ne j\le n} \E(X_{i\pi
(j)}+X_{j\pi(i)}-X_{i\pi(i)}-X_{j\pi(j)})
\\
&&\hphantom{\frac{1}{4n} \sum
_{1\le i\ne j\le n}}{} \times\int_0^{X_{i\pi(j)}+X_{j\pi(i)}-X_{i\pi(i)}-X_{j\pi(j)}} \biggl(|U|+
\frac{\sqrt{2\uppi}}{4}\biggr)\bigl(2|X_{i\pi(i)}+X_{j\pi(j)}|+t\bigr)\,\mathrm{d}t
\\
&=&\frac{1}{4n}\sum_{1\le i\ne j\le n} \E\biggl(|U|+
\frac{\sqrt{2\uppi
}}{4}\biggr) \biggl[ (X_{i\pi(j)}+X_{j\pi(i)}-X_{i\pi(i)}-X_{j\pi
(j)})^22|X_{i\pi(i)}+X_{j\pi(j)}|
\\
&&\hphantom{\frac{1}{4n}\sum_{1\le i\ne j\le n} \E\biggl(|U|+
\frac{\sqrt{2\uppi
}}{4}\biggr) \biggl[}{} +\frac{|X_{i\pi(j)}+X_{j\pi(i)}-X_{i\pi(i)}-X_{j\pi
(j)}|^3}{2} \biggr]
\\
&\le&24\biggl(\sqrt{\frac{n}{n-5}+\frac{24n}{(n-5)^2}}+\frac{\sqrt{2\uppi
}}{4}
\biggr)\gamma ,%
\end{eqnarray*}
where we used \eq{ES2}, \eq{2.31} and
\begin{eqnarray*}
&& |X_{i\pi(j)}+X_{j\pi(i)}-X_{i\pi(i)}-X_{j\pi(j)}|^2|X_{i\pi
(i)}+X_{j\pi(j)}|
\\
&&\quad\le\tfrac{16}{3} \bigl( |X_{i\pi(j)}|^3+
|X_{j\pi(i)}|^3\bigr)+\tfrac{32}{3} \bigl(
|X_{i\pi(i)}|^3+ |X_{j\pi(j)}|^3\bigr).
\end{eqnarray*}
Therefore, with
%
\begin{eqnarray}
\label{cthree} c_3&:=&\sqrt{\frac{n}{n-5}+
\frac{24n}{(n-5)^2}}, %
\\
\label{R2} |R_2|&\le&\biggl( 8c_1 +c_2
\biggl(1+\frac{1}{\sqrt{n}}\biggr)+24c_3 +6\sqrt{2\uppi} \biggr)\gamma.
\end{eqnarray}
Next, we bound $R_1$.
\begin{eqnarray*}
R_1&= &\E f'(W) \biggl(1-\frac{(W'-W)^2}{2\lambda}\biggr)
\\
&=&\frac{1}{n^2(n-1)^2}
\\
&&{} \times\sum_{1\le i,j,k,l\le n,\atop i\ne j, k\ne l} \E \biggl(
f'(W) \biggl(1-\frac{(X_{il}+X_{jk}-X_{ik}-X_{jl})^2}{2\lambda}\biggr) \bigl|\\
&& \hphantom{{} \times\sum_{1\le i,j,k,l\le n,\atop i\ne j, k\ne l} \E \biggl(}I=i, J=j ,\pi(i)=k,
\pi(j)=l \biggr)
\\
&=&\frac{1}{n^2(n-1)^2} \sum_{1\le i,j,k,l\le n,\atop i\ne j,
k\ne l} \E \biggl(
f'(W) \biggl(1-\frac{(X_{il}+X_{jk}-X_{ik}-X_{jl})^2}{2\lambda
}\biggr) \bigl| \pi(i)=k, \pi(j)=l
\biggr) %
\end{eqnarray*}
since $\mathbb{X}, \pi$ and $(I,J)$ are independent.
For each choice of $i\ne j, k\ne l$, let $\mathbb{X}^{ijkl}:=\{
X^{ijkl}_{i'j'}\dvt  i', j'\in[n]\}$ be the same as $\mathbb{X}$ except
that $\{X_{ik}, X_{il}, X_{jk}, X_{jl}\}$ has been replaced by an
independent copy $\{X_{ik}', X_{il}', X_{jk}', X_{jl}'\}$. Define
\begin{eqnarray*}
W^{ijkl}=\sum_{i'=1}^n
X_{i'\pi(i')}^{ijkl}.
\end{eqnarray*}
Then,
%
\begin{equation}
\label{0.01} W^{ijkl} \mbox{ is independent of } \{X_{ik},
X_{il}, X_{jk}, X_{jl}\} \mbox{ and } \mathcal{L}
\bigl(W^{ijkl}\bigr)=\mathcal{L}(W).
\end{equation}
Next, we define a new permutation $\pi_{ijkl}$ coupled with $\pi$
such that
\begin{eqnarray*}
\mathcal{L}(\pi_{ijkl})=\mathcal{L}\bigl(\pi | \pi(i)=k, \pi(j)=l\bigr).
\end{eqnarray*}
This coupling has been constructed by Goldstein [11].
Let $\tau_{ij}$ denote the transposition of $i, j$. Define
\begin{equation}\label{1001}
\pi_{ijkl}=
\cases{
\pi &  \quad \mbox{if} $l=\pi(j), k= \pi(i)$,\cr
\pi\cdot \tau_{\pi^{-1}(k), i} &  \quad \mbox{if} $l=\pi(j), k\ne \pi(i)$,\cr
\pi\cdot \tau_{\pi^{-1}(l), j} & \quad \mbox{if} $l\ne \pi(j), k= \pi(i)$,\cr
\pi\cdot \tau_{\pi^{-1}(l), i}\cdot \tau_{\pi^{-1}(k), j}\cdot \tau_{ij} & \quad \mbox{if} $l\ne \pi(j), k\ne \pi(i)$.
}
\end{equation}
Let
\begin{eqnarray*}
W_{ijkl}=\sum_{i'=1}^n
X_{i'\pi_{ijkl}(i')}.
\end{eqnarray*}
Since $W_{ijkl}$ has the conditional distribution of $W$ given $\pi
(i)=k, \pi(j)=l$, and since $\mathbb{X}$ and $\pi$ are independent,
we have
\begin{eqnarray*}
&&\E \biggl( f'(W) \biggl(1-\frac{(X_{il}+X_{jk}-X_{ik}-X_{jl})^2}{2\lambda
}\biggr) \bigl|
\pi(i)=k, \pi(j)=l \biggr)
\\
&&\quad=\E f'(W_{ijkl}) \biggl(1-\frac
{(X_{il}+X_{jk}-X_{ik}-X_{jl})^2}{2\lambda}\biggr).
\end{eqnarray*}
Therefore,
\begin{eqnarray*}
R_1
&= &\E f'(W) \biggl(1-\frac{(W'-W)^2}{2\lambda}\biggr)
\\
&= &\frac{1}{(n(n-1))^2} \sum_{1\le i,j,k,l\le n,\atop i\ne j,
k\ne l} \E\biggl(1-
\frac{(X_{il}+X_{jk}-X_{ik}-X_{jl})^2}{2\lambda}\biggr) {\bigl(} f'(W_{ijkl})
-f'\bigl(W^{ijkl}\bigr) {\bigr)}
\\
&&{} + \frac{1}{(n(n-1))^2} \sum_{1\le i,j,k,l\le n,\atop i\ne
j, k\ne
l} \E
\biggl(1-\frac{(X_{il}+X_{jk}-X_{ik}-X_{jl})^2}{2\lambda}\biggr) f'\bigl(W^{ijkl}\bigr).
\end{eqnarray*}
Define index sets $\mathcal{I}=\{i,j,\pi^{-1}(k), \pi^{-1}(l)\}$ and
$\mathcal{J}=\{k, l, \pi(i), \pi(j)\}$. Then $|\mathcal
{I}|=|\mathcal{J}|\in\{2,3,4\}$. Letting $S=\sum_{i'\notin\mathcal
{I}} X_{i'\pi(i')}$, we can write
\begin{eqnarray*}
W_{ijkl}=S+\sum_{i'\in\mathcal{I}} X_{i'\pi_{ijkl}(i')}
\end{eqnarray*}
and
\begin{eqnarray*}
W^{ijkl}=S+\sum_{i'\in\mathcal{I}} X^{ijkl}_{i'\pi(i')}.
\end{eqnarray*}
Since $S$ is a function depending only on the components of $\mathbb
{X}$ outside the square $\mathcal{I}\times\mathcal{J}$ and $\{\pi
(i)\dvt  i\notin\mathcal{I}\}$,
%
\begin{eqnarray}
\label{3120} &&S \mbox{ is independent of} \biggl
\{X_{il}, X_{jk}, X_{ik}, X_{jl}, \sum
_{i'\in\mathcal{I}} X_{i'\pi_{ijkl}(i')}, \sum
_{i'\in\mathcal{I}} X^{ijkl}_{i'\pi(i')}\biggr\}\nonumber
\\[-8pt]\\[-8pt]
&&\mbox{given } \pi^{-1}(k), \pi^{-1}(l), \pi(i), \pi(j).\nonumber
\end{eqnarray}
The conditional distribution of $S$ given $\pi^{-1}(k), \pi^{-1}(l),
\pi(i), \pi(j)$ is the same as that of $S$ in \eq{2.14} under a
relabeling of indices.
From \eq{ES2}, $\E (|S| |\pi^{-1}(k), \pi^{-1}(l), \pi(i),
\pi(j)  ) \le c_3$ where $c_3$ was defined in \eq{cthree}. From
\eq{0.01}, \eq{1.17} and \eq{370},
\begin{eqnarray*}
&&\frac{1}{(n(n-1))^2} \biggl| \sum_{1\le i,j,k,l\le n,\atop i\ne j,
k\ne l} \E
\biggl(1-\frac{(X_{il}+X_{jk}-X_{ik}-X_{jl})^2}{2\lambda}\biggr) f'\bigl(W^{ijkl}\bigr)\biggr |
\\
&&\quad=\frac{1}{(n(n-1))^2} \biggl| \E f'(W)\sum
_{1\le i,j,k,l\le
n,\atop
i\ne j, k\ne l} \E\biggl(1-\frac{(X_{il}+X_{jk}-X_{ik}-X_{jl})^2}{2\lambda
}\biggr) \biggr|
\\
&&\quad= \biggl| \E f'(W) \E\biggl(1-\frac{(W'-W)^2}{2\lambda}\biggr) \biggr|\le
\frac
{1}{\sqrt{n}}. %
\end{eqnarray*}
Therefore,
\begin{eqnarray*}
 |R_1|&\le& \biggl|\frac{1}{(n(n-1))^2} \sum_{1\le i,j,k,l\le
n,\atop i\ne
j, k\ne l} \E
\biggl(1-\frac{(X_{il}+X_{jk}-X_{ik}-X_{jl})^2}{2\lambda}\biggr) {\bigl(} f'(W_{ijkl})
-f'\bigl(W^{ijkl}\bigr) {\bigr)} \biggr|
\nonumber
\\
&&{} +\frac{1}{\sqrt{n}}
\\
&=& \biggl|\frac{1}{(n(n-1))^2} \sum_{1\le i,j,k,l\le n,\atop
i\ne j,
k\ne l} \E
\biggl(1-\frac{(X_{il}+X_{jk}-X_{ik}-X_{jl})^2}{2\lambda
}\biggr)
\nonumber
\\
&&\hphantom{{\biggl|}\frac{1}{(n(n-1))^2} \sum_{1\le i,j,k,l\le n,\atop
i\ne j,
k\ne l}}{} \times {\biggl(} f'\biggl(S+\sum
_{i'\in\mathcal{I}} X_{i'\pi
_{ijkl}(i')}\biggr) -f'\biggl(S+\sum
_{i'\in\mathcal{I}} X^{ijkl}_{i'\pi(i')}\biggr) {
\biggr)} \biggr|+\frac{1}{\sqrt{n}}
\nonumber
\\
&\le& R_{1,1}+R_{1,2}+\frac{1}{\sqrt{n}}, %
\end{eqnarray*}
where using the Stein equation \eq{1.9},
\begin{eqnarray*}
R_{1,1}&=& \biggl| \frac{1}{(n(n-1))^2} \sum
_{1\le i,j,k,l\le n,\atop
i\ne j, k\ne l} \E\biggl(1-\frac{(X_{il}+X_{jk}-X_{ik}-X_{jl})^2}{2\lambda
}\biggr)
\nonumber
\\
&&\hphantom{\biggl| \frac{1}{(n(n-1))^2} \sum
_{1\le i,j,k,l\le n,\atop
i\ne j, k\ne l}}{} \times {\biggl(} \biggl(S+\sum_{i'\in\mathcal{I}}
X_{i'\pi
_{ijkl}(i')}\biggr)f\biggl(S+\sum_{i'\in\mathcal{I}}
X_{i'\pi_{ijkl}(i')}\biggr)\\
&&\hphantom{\biggl| \frac{1}{(n(n-1))^2} \sum
_{1\le i,j,k,l\le n,\atop
i\ne j, k\ne l}{} \times {\biggl(}}{} -\biggl(S+\sum_{i'\in\mathcal{I}}
X^{ijkl}_{i'\pi(i')}\biggr)f\biggl(S+\sum
_{i'\in
\mathcal{I}} X^{ijkl}_{i'\pi(i')}\biggr) {\biggr)} \biggr| ,
\end{eqnarray*}
and
\begin{eqnarray*}
R_{1,2}&=& \biggl| \frac{1}{(n(n-1))^2} \sum
_{1\le i,j,k,l\le n,\atop
i\ne j, k\ne l} \E\biggl(1-\frac{(X_{il}+X_{jk}-X_{ik}-X_{jl})^2}{2\lambda
}\biggr)
\\
&&\hphantom{| \frac{1}{(n(n-1))^2} \sum
_{1\le i,j,k,l\le n,\atop
i\ne j, k\ne l} }{} \times {\biggl(} I\biggl(S+\sum_{i'\in\mathcal{I}}
X_{i'\pi
_{ijkl}(i')}\le z\biggr)-I\biggl(S+\sum_{i'\in\mathcal{I}}
X^{ijkl}_{i'\pi
(i')}\le z\biggr) {\biggr)} \biggr| . %
\end{eqnarray*}
Applying \eq{1.16}, \eq{3120}, \eq{ES2} and \eq{1.2b}, $R_{1,1}$
can be bounded as follows.
\begin{eqnarray*}
R_{1,1}&\le&\frac{1}{(n(n-1))^2}\\
&&{}\times\sum
_{1\le i,j,k,l\le n,\atop i\ne j,
k\ne l}\E \biggl\{\E \bigl( |S| \bigl|\pi^{-1}(k),
\pi^{-1}(l), \pi (i), \pi(j) \bigr)+\frac{\sqrt{2\uppi}}{4} \biggr\}
\\
&&\hphantom{{}\times\sum
_{1\le i,j,k,l\le n,\atop i\ne j,
k\ne l}}{} \times\E \biggl\{ \biggl(\biggl|\sum_{i'\in\mathcal{I}}
X_{i'\pi
_{ijkl}(i')}\biggr|+\biggl|\sum_{i'\in\mathcal{I}} X^{ijkl}_{i'\pi(i')}\biggr|
\biggr)
\\
&&\hphantom{{}\times\sum
_{1\le i,j,k,l\le n,\atop i\ne j,
k\ne l}{} \times\E \biggl\{}{} \times \biggl| 1-\frac
{(X_{il}+X_{jk}-X_{ik}-X_{jl})^2}{2\lambda} \biggr| \bigl| \pi ^{-1}(k),
\pi^{-1}(l), \pi(i), \pi(j) \biggr\}
\\
&\le&\frac{c_3+\sqrt{2\uppi}/4}{n^2(n-1)^2}\\
&&{}\times\sum_{1\le i,j,k,l\le
n,\atop i\ne j, k\ne l} \max \biggl\{ \E
\biggl( \sum_{i'\in\mathcal
{I}} |X_{i' \pi_{ijkl}(i')}| +\sum
_{i'\in\mathcal{I}} \bigl|X_{i'\pi
(i')}^{ijkl}\bigr| \biggr),
\\
&& \hphantom{{}\times\sum_{1\le i,j,k,l\le
n,\atop i\ne j, k\ne l} \max \biggl\{}\frac{4}{2\lambda}\E \biggl( \sum_{i'\in\mathcal{I}}
|X_{i' \pi_{ijkl}(i')}| +\sum_{i'\in\mathcal{I}} \bigl|X_{i'\pi
(i')}^{ijkl}\bigr|
\biggr) \bigl( X_{il}^2+X_{jk}^2+X_{ik}^2+X_{jl}^2
\bigr) \biggr\}
\\
&\le&\frac{c_3+\sqrt{2\uppi}/4}{n^2(n-1)^2} \sum_{1\le i,j,k,l\le
n,\atop i\ne j, k\ne l} \max \biggl\{ \E
\bigl[ |X_{ik}| + |X_{jl}|+ |X_{\pi^{-1}(k)\pi(i)}
|+|X_{\pi^{-1}(l)\pi
(j)}|
\\
&&\hphantom{\frac{c_3+\sqrt{2\uppi}/4}{n^2(n-1)^2} \sum_{1\le i,j,k,l\le
n,\atop i\ne j, k\ne l} \max \biggl\{ \E
\bigl[}{} +\bigl|X_{i\pi(i)}^{ijkl}\bigr|+\bigl|X_{j\pi(j)}^{ijkl}\bigr|+\bigl|X_{\pi
^{-1}(k)k}^{ijkl}\bigr|+\bigl|X_{\pi^{-1}(l)l}^{ijkl}\bigr|
\bigr],
\\
&&\hphantom{\frac{c_3+\sqrt{2\uppi}/4}{n^2(n-1)^2} \sum_{1\le i,j,k,l\le
n,\atop i\ne j, k\ne l} \max \biggl\{} \frac{2}{\lambda}\E \biggl( \sum_{i'\in\mathcal{I}}
\frac
{4}{3}|X_{i' \pi_{ijkl}(i')}|^3 +\sum
_{i'\in\mathcal{I}}\frac
{4}{3} \bigl|X_{i'\pi(i')}^{ijkl}\bigr|^3\\
&&\hphantom{\frac{c_3+\sqrt{2\uppi}/4}{n^2(n-1)^2} \sum_{1\le i,j,k,l\le
n,\atop i\ne j, k\ne l} \max \biggl\{\frac{2}{\lambda}\E \biggl(}{}+ \frac{16}{3} \bigl( |X_{il}|^3+|X_{jk}|^3+|X_{ik}|^3
+|X_{jl}|^3 \bigr) \biggr) \biggr\}
\\
&\le&\max \Biggl\{ \frac{8(c_3+\sqrt{2\uppi}/4)}{n^2} \sum_{i,k=1}^n
\E|X_{ik}|, 32\biggl(c_3+\frac{\sqrt{2\uppi}}{4}\biggr)
\frac{n-1}{n}\gamma \Biggr\}
\\
&\le&\max \biggl\{ 8 \biggl(c_3+\frac{\sqrt{2\uppi}}{4}\biggr)
\frac{1}{\sqrt {n}}, 32\biggl(c_3+\frac{\sqrt{2\uppi}}{4}\biggr)
\frac{n-1}{n}\gamma \biggr\}, %
\end{eqnarray*}
where we used
%
\begin{equation}
\label{maxstar} \biggl|1- \frac{(X_{il}+X_{jk}-X_{ik}-X_{jl})^2}{2\lambda} \biggr| \le \max \biggl\{1, \frac{(X_{il}+X_{jk}-X_{ik}-X_{jl})^2}{2\lambda}
\biggr\}
\end{equation}
and
\begin{eqnarray*}
\sum_{i,k=1}^n \E|X_{ik}|\leq n
\sqrt{\sum_{i,k=1}^n \E
X_{ik}^2}\leq n^{3/2}.
\end{eqnarray*}
Now we bound $R_{1,2}$.
\begin{eqnarray*}
R_{1,2}&=& \biggl| \frac{1}{(n(n-1))^2} \\
&&{\hphantom{\biggl|}}{}\times\sum
_{1\le i,j,k,l\le n,\atop
i\ne j, k\ne l} \E \biggl\{ \E \biggl[ \biggl(1-\frac
{(X_{il}+X_{jk}-X_{ik}-X_{jl})^2}{2\lambda}
\biggr)
\\
&&\hphantom{\biggl|{}\times\sum
_{1\le i,j,k,l\le n,\atop
i\ne j, k\ne l} \E \biggl\{ \E \biggl[}{} \times {\biggl(} I\biggl(S+\sum_{i'\in\mathcal{I}}
X_{i'\pi
_{ijkl}(i')}\le z\biggr)-I\biggl(S+\sum_{i'\in\mathcal{I}}
X^{ijkl}_{i'\pi
(i')}\le z\biggr) {\biggr)} \bigl|\\
&&\hphantom{|\sum
_{1\le i,j,k,l\le n,\atop
i\ne j, k\ne l} \E \biggl\{ \E \biggl[} \pi^{-1}(k),
\pi^{-1}(l), \pi(i), \pi (j) \biggr] \biggr\} \biggr|
\\
&\le&\frac{1}{(n(n-1))^2} \\
&&{}\times\sum_{1\le i,j,k,l\le n,\atop i\ne j, k\ne
l} \E \biggl\{ \E
\biggl[ \biggl|1-\frac
{(X_{il}+X_{jk}-X_{ik}-X_{jl})^2}{2\lambda}\biggr|
\nonumber
\\
& &\hphantom{{}\times\sum_{1\le i,j,k,l\le n,\atop i\ne j, k\ne
l} \E \biggl\{ \E
\biggl[}{}\times I\biggl(z-\max\biggl\{\sum_{i'\in\mathcal{I}}
X_{i'\pi
_{ijkl}(i')},\sum_{i'\in\mathcal{I}} X^{ijkl}_{i'\pi(i')}
\biggr\} \\
&&\hphantom{{}\times\sum_{1\le i,j,k,l\le n,\atop i\ne j, k\ne
l} \E \biggl\{ \E
\biggl[{}\times I\biggl(}{} \le S\le z-\min\biggl\{\sum_{i'\in\mathcal{I}}
X_{i'\pi_{ijkl}(i')} , \sum_{i'\in\mathcal{I}} X^{ijkl}_{i'\pi(i')}
\biggr\}\biggr)
\bigl |\\
&&\hphantom{{}\times\sum_{1\le i,j,k,l\le n,\atop i\ne j, k\ne
l} \E \biggl\{ \E
\biggl[} \pi^{-1}(k), \pi^{-1}(l), \pi(i), \pi(j) \biggr] \biggr
\}. %
\end{eqnarray*}
Recall \eq{3120} and the fact that the conditional distribution of $S$
given $\pi^{-1}(k)$, $\pi^{-1}(l)$, $\pi(i)$, $\pi(j)$ is the same
as that of $S$ in \eq{2.14} under a relabeling of indices. We can
therefore apply the concentration inequality \eq{5-p1-1} to obtain the
following upper bound on $R_{1,2}$.
\begin{eqnarray*}
R_{1,2}&\le&\frac{1}{(n(n-1))^2} \sum
_{1\le i,j,k,l\le n,\atop i\ne
j, k\ne l} \E\biggl|1-\frac{(X_{il}+X_{jk}-X_{ik}-X_{jl})^2}{2\lambda}\biggr|
\\
&&\hphantom{\frac{1}{(n(n-1))^2} \sum
_{1\le i,j,k,l\le n,\atop i\ne
j, k\ne l}}{} \times {\biggl\{} c_1 \biggl(\biggl|\sum
_{i'\in\mathcal{I}} X_{i'\pi
_{ijkl}(i')}\biggr|+\biggl|\sum_{i'\in\mathcal{I}}
X^{ijkl}_{i'\pi(i')}\biggr|\biggr)+c_2 \gamma {\biggr\}}
\\
&\le& c_1 \max {\biggl\{}\frac{8}{\sqrt{n}}, 32\frac{n-1}{n}
\gamma {\biggr\}}+c_2 \biggl(1+\frac{1}{\sqrt{n}}\biggr) \gamma,
\end{eqnarray*}
where we used \eq{maxstar} and \eq{370}.
Therefore,
%
\begin{equation}
\label{R1} |R_1|\le\biggl(c_1
+c_3+\frac{\sqrt{2\uppi}}{4}\biggr)\max {\biggl\{} \frac
{8}{\sqrt{n}}, 32
\frac{n-1}{n}\gamma {\biggr\}} +c_2 \biggl(1+\frac
{1}{\sqrt{n}}
\biggr) \gamma+\frac{1}{\sqrt{n}}. %
\end{equation}
Summing \eq{R1}, \eq{R2}, \eq{R3} yields an upper bound on $\sup_{z\in\mathbb{R}}|P(W\le z)-\Phi(z)|$ as
%
\begin{eqnarray}
\label{3131} &&\sup_{z\in\mathbb{R}}\bigl|P(W\le z)-\Phi(z)\bigr|\nonumber
\\[-8pt]\\[-8pt]
&&\quad\le \biggl( 40c_1+2\biggl(1+\frac{1}{\sqrt{n}}
\biggr)c_2+14\sqrt{2\pi }+56c_3+2\biggl(
\frac{n}{n-1}\biggr)^{3/2} \biggr) \gamma,\nonumber %
\end{eqnarray}
where we used \eq{301}. Recall $c_0=451$ and $n\ge203\,000$. Using
$c_0=451$ and $n=203\,000$, the upper bound in \eq{3131} is calculated
to be smaller than $451\gamma$. Since $c_1, c_2$ and $c_3$ decrease as
$n$ increases, \eq{t1-1} holds for $n\ge203\,000$. This completes the
proof of Theorem~\ref{t1}.

\begin{rem}
Radoslaw Adamczak has brought to our attention an inconsistency between (4.3) and (4.6)
in Ho and Chen \cite{HoCh78}.  This error can be corrected by defining $\rho$ given $(I,K,L,M)$ in
(4.6) as $\pi_{IKLM}$ in (\ref{1001}) in this paper. This correction will not affect the rest of Ho and Chen~\cite{HoCh78}.
\end{rem}

\section*{Acknowledgements}

This work is based on part of the Ph.D. thesis of the second author.
The second author is thankful to the first author for his guidance and
helpful discussions. Both the authors would like to thank the referee
for their very detailed and helpful comments. We would also like to thank Radoslaw Adamczak for
bringing to our attention an error in Ho and Chen \cite{HoCh78}.
This work is partially
supported by Grant C-389-000-010-101 and Grant C-389-000-012-101 at the
National University of Singapore.




\printhistory

\end{document}